\DeclareMathAlphabet      {\mathbf}{OT1}{cmr}{bx}{n}
\DeclareFontFamily{OT1}{pzc}{}
\DeclareFontShape{OT1}{pzc}{m}{it}%
{<-> s * [1.15] pzcmi7t}{}
\DeclareMathAlphabet{\mathpzc}{OT1}{pzc}{m}{it}
\newtheorem{thm}{Theorem}
\newtheorem{theorem}{Theorem}[section]
\newtheorem{lemma}[theorem]{Lemma}
\newtheorem{corollary}[theorem]{Corollary}
\newtheorem{definition}[theorem]{Definition}
\newtheorem{prop}[theorem]{Proposition}
\newtheorem{remark}{Remark}[section]
\def\ack{\section*{Acknowledgements}%
  \addtocontents{toc}{\protect\vspace{6pt}}%
  \addcontentsline{toc}{section}{Acknowledgements}%
}
\begin{document}  \large
\pagenumbering{arabic}

\title{Cobordism maps on periodic Floer homology induced by elementary Lefschetz fibrations }
\author{{ Guanheng Chen}}
\date{}
\maketitle
\thispagestyle{empty}
\begin{abstract}    
Periodic Floer homology (PFH) is a Gromov--Floer type invariant for fibered three--manifolds with Hamiltonian structures.  The cobordism maps on periodic Floer homology induced by symplectic cobordisms are currently only defined indirectly by using Seiberg--Witten theory. In this paper, we investigate the cobordism maps induced by a class of symplectic cobordisms constructed by P. Seidel, called elementary Lefschetz fibrations. We define the PFH cobordism maps induced  by elementary Lefschetz fibrations in terms of  holomorphic curves.  Moreover,   we compute  these  maps for some cases.
\end{abstract}

\section{Introduction and Main results}
Let $\pi: Y \to S^1$ be  a surface fibration  over the circle together with a closed  fiberwise non--degenerate $2$--form    over $Y$. Such a 2--form is called  an \emph{admissible 2--form} or  a \emph{Hamiltonian structure}.   Assume that the fiber of $\pi$ is a connected oriented surface, possibly with boundary.
 Under the above setup, M. Hutchings introduces a Gromov--Floer type invariant called  \emph{periodic Floer homology} (abbreviated as PFH) $PFH_*(Y, \omega)$ \cite{H1}. The definition will be reviewed later. Y--J. Lee and C. H. Taubes show that PFH is isomorphic to a version of Seiberg--Witten cohomology \cite{LT}.

Given a Lefschetz fibration $\pi_X: W \to B$ together with a closed fiberwise nondegenerate $2$--form $\omega_X$ such that $\partial W=Y$ and $\omega_W \vert_Y =\omega$,  it is expected that the Lefschetz fibration induces a homomorphism  $PFH(W, \omega_W): PFH(Y, \omega) \to  \Lambda$, where  $ \Lambda$ is a local coefficient system. Such a homomorphism is called \emph{a cobordism map}.   We call the triple $(W, \pi_W, \omega_W)$ a \emph {fiberwise symplectic cobordism}.  The 2--form $\omega_W$ is called \emph {a fiberwise symplectic form} or \emph {an admissible 2--form.}  Note that $\omega_W$ is not necessary symplectic.  But we can construct a natural symplectic form $\Omega_W$ by taking $\Omega_W=\omega_W + \pi^*_W \omega_B$, where $\omega_B$ is a large volume form of  $B$.

The purpose  of this paper is to understand the cobordism maps induced by a special class of fiberwise symplectic cobordisms, called  \emph{elementary Lefschetz fibrations},  in terms of holomorphic curves.     Such Lefschetz fibrations are constructed by  Seidel \cite{PS}, \cite{PS1}. We  use  $(X, \pi_X, \omega_X)$ to denote the  elementary Lefschetz fibration throughout.

\paragraph{Elementary Lefschetz fibrations} Roughly speaking, an elementary Lefschetz fibration $(X, \pi_X, \omega_X)$ consists  of  a Lefschetz fibration  $(X, \omega_X)$ over a disk with exactly one critical point together with  a fiberwise symplectic form $\omega_X$.  
Let us  briefly review    its construction. More details are also given in  Section \ref{section3}. 
Start  with the local model
\begin{align} \label{eq18}
\pi:  \ &\mathbb{C}_{\textbf{x}=(x_1, x_2)}^2  \to \mathbb{C}  \ \ \textbf{x} \to x_1^2 +x_2^2,
\end{align}
 we first construct an exact  Lefschetz fibration $(E, \pi_E)$ by a suitable cutting of  (\ref{eq18}). More precisely,  for $\lambda, \delta>0$, define $E=\{ \textbf{x} \in \mathbb{C}^2 \vert  |\textbf{x}|^4-|\pi(\textbf{x})|^2 \le 4 \lambda^2, \ |\pi(\textbf{x})| \le \delta\}$ and $\pi_E=\pi \vert_E$. Then $E$ is a Lefschetz fibration over a disk $D_{\delta}$ with radius less than or equal to $\delta$. $(E, \pi_E)$ has a unique critical point at the origin.  A regular fiber of $\pi_E$ is a cylinder of length $2\lambda$. Note that $E$ is a manifold with corners. The boundary of $E$ can be divided into the vertical boundary $\partial_v E=\pi^{-1}(\partial D_{\delta})$ and the horizontal boundary $\partial_h E =  \sqcup_{z \in D_{\delta}} \partial \pi^{-1}(z)$.
Topologically, $(X, \pi_X)$ is obtained by gluing $(E, \pi_E)$ with a trivial fibration $S \times D_{\delta}$  over a disk along their horizontal boundaries, i.e., $X=E \cup_{\partial \pi^{-1}(z) \sim \{z\} \times \partial S} (S \times D_{\delta})$, where $S$ is an oriented compact surface with two boundary components.

A  candidate of the  fiberwise symplectic form over $E$ is the restriction of the standard symplectic form  $\omega_{\mathbb{C}^2 } \vert_E$. For the purpose of gluing $\omega_{\mathbb{C}^2 } \vert_E$ with an area form $\omega_S$ of $S$, we need to modify $\omega_{\mathbb{C}^2 } \vert_E$ such that it is ``trivial near $\partial_h E$''.    $\omega_X$ is the result of  this gluing. We will explain more details about this point in Section \ref{section3}.

Note that the construction of $(X, \pi_X, \omega_X)$ depends on many choices such as $\lambda, \delta, \omega_S$, etc. But we still prefer to call it `the elementary  Lefschetz fibration' because they are isotropic to each other in the sense:  Given two elementary Lefschetz fibrations $(X, \pi_X, \omega_X)$ and  $(X', \pi_{X'}, \omega'_X)$ constructed by different data, then  there is a diffeomorphism  $F:X \to X'$ preserving the Lefschetz fibration structure such that 
$[F^* \omega'_X -\omega_X]=0 \in H^2(X, \mathbb{R})$.  Also, the Floer homology and its cobordism maps under consideration should not depend on this  deformation.

\paragraph{Results} The main result of this paper is to construct the cobordism maps induced by the elementary Lefschetz fibrations by using holomorphic curves. Moreover, we compute them for some cases. \emph{We assume that the fiber $F$ of $X$  is a closed surface with genus $g(F) \ge 2$  and the vanishing cycle is non--separating; unless otherwise stated.}
\begin{thm} \label{thm3}
Let $(X, \pi_X, \omega_X)$ be an  elementary Lefschetz fibration. Fix an integer  $Q $. Assume that  $\int_F \omega_X \ge Q+1$ and $Q \ne g(F)-1$.  
\begin{enumerate} [label=\textbf{\Alph*.}]
\item \label{A}
Then for  a generic  $\Omega_X$--tame almost complex structure $J$ which is sufficiently close to $  \mathcal{J}_h(X, \omega_X)$ (see Definition \ref{def2}),   we have a well--defined homomorphism
\begin{equation*}
PFH(X, \omega_X)_J : \mathds{A}(X)\otimes PFH_*(Y, \omega, Q)  \to \mathbb{Z},
\end{equation*}
where $\mathds{A}(X)=[U]\otimes \Lambda^*H_1(X, \mathbb{Z})$ and $U$ is the $U$--map (see \cite{H3}).
The homomorphism is defined by counting $J$--holomorphic curves.
\item  \label{B}
 Moreover, we have:
\begin{itemize}
\item
If $Q > g(F)-1$,  then
\begin{equation*}
PFC(X, \omega_X)_J(  (\Pi_a e_a) e_0^{m_0} e_1^{m_1})=1
\end{equation*}
and $PFC(X, \omega_X)_J$ maps the other  ECH generators   to zero.  Here  $a$  are     critical points of a Morse function  $f_S$ with $\nabla^2 f_S(a) >0$.  Each  $e_a$ is  the periodic orbit corresponding to $a$,    and
  $\{e_i\}_{i=0, 1}$ are the only two degree $1$ elliptic periodic orbits of the Dehn twist. For the precise definition, please refer to   Section \ref{section2}.

\item
If $g(F)-1\ge 2Q$, then
\begin{equation*}
PFH(X, \omega_X)_J(   e^m)=1
\end{equation*}
and maps the  other  generators  of  $PFH_*(Y, \omega, Q)$ to zero.  In this case, $e_0$ and $e_1$  are homologous and $e$ is their homology class.
\end{itemize}
\end{enumerate}
\end{thm}
It is not difficult to see that the cobordism map $PFH(X, \omega_X)_J$ can  split into
\begin{equation*}
PFH(X, \omega_X)_J=\sum_{\Gamma_X \in H_2(X, \partial X, \mathbb{Z})} PFH(X, \omega_X, \Gamma_X)_J.
\end{equation*}
Some of these components  are invariant under the   blow--up (see  Lemma 5.2 of \cite{GHC}). Thus  we have the following corollary.
\begin{corollary}
Let $\{x_i\}_{i=1}^k \subset X$ be a  finite set of points such that $\pi_X(x_i) \ne \pi_X(x_j)$ for $i \ne j$ and $\pi_X(x_i) \ne 0$ for all  $i$. Let $(\pi_X' : X' \to D, \omega_{X'})$ be the blow--up  of $X$ at $\{x_i\}_{i=1}^k$.  For any $\Gamma_X \in H_2(X, \partial{X}, \mathbb{Z})$ regarded as a homology class in $X'$, then  $PFH(X', \omega_{X'}, \Gamma_X)_{J'} $ satisfies the same conclusions in Theorem \ref{thm3}.
\end{corollary}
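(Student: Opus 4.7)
The strategy is to reduce the corollary directly to Theorem \ref{thm3} via the blow-up invariance property of the PFH cobordism map recorded in Lemma 5.2 of \cite{GHC}. The hypotheses on the blow-up centers are arranged precisely so that this reduction goes through without modification.

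First I would check that $(\pi_X' : X' \to D, \omega_{X'})$ still fits the framework of Theorem \ref{thm3}. Since $\pi_X(x_i) \ne 0$ for every $i$, none of the blow-ups disturb the single Lefschetz singularity at the origin; and since $\pi_X(x_i)\ne \pi_X(x_j)$ for $i\ne j$, each exceptional divisor is contained in a distinct fiber over a regular value of $\pi_X$. Consequently $\pi_X' : X' \to D$ is still an elementary Lefschetz fibration in the sense of the paper, with the same generic fiber genus $g(F)$ and the same (non-separating) vanishing cycle. Furthermore $\partial X' = \partial X = Y$, and $\omega_{X'}\vert_Y = \omega_X\vert_Y = \omega$, so the induced cobordism map $PFH(X', \omega_{X'}, \Gamma_X)$ has the same source $PFH_*(Y, \omega, Q)$ and target $\mathbb{Z}$ as $PFH(X, \omega_X)$.

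Next I would choose the auxiliary data on $X'$ compatibly with that on $X$: take $\omega_{X'}$ to agree with $\omega_X$ outside arbitrarily small neighborhoods of the exceptional divisors $E_i$ (adjusted near each $E_i$ so as to remain closed, fiberwise nondegenerate, admissible, and to satisfy $\int_{F'} \omega_{X'} > Q$ on the proper transform $F'$ of a generic fiber), and similarly choose a generic $\Omega_{X'}$-tame $J'$ on $X'$, sufficiently close to $\mathcal{J}_h(X', \omega_{X'})$, that agrees with $J$ outside those neighborhoods. Lemma 5.2 of \cite{GHC} then gives $PFC(X', \omega_{X'}) = PFC(X, \omega_X)$ at the chain level, so the computation from Theorem \ref{thm3} transfers verbatim in both regimes $Q > g(F)-1$ and $g(F)-1 > 2Q$.

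The only step that is not purely formal is verifying that the admissibility condition and the near-$\mathcal{J}_h$ condition can be preserved under the local modification near each exceptional divisor; this is a standard local construction on a blow-up, and the bookkeeping is exactly the one already carried out in the proof of Lemma 5.2 of \cite{GHC}, so I do not expect a genuine obstacle here. I therefore expect the proof to consist essentially of citing \cite{GHC} once the above set-up is in place.
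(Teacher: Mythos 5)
Your core step — reducing to Theorem \ref{thm3} by invoking the blow-up invariance of the PFH cobordism map (Lemma 5.2 of \cite{GHC}) — is exactly the paper's justification, so the argument as a whole goes through. However, one intermediate claim in your set-up is wrong and somewhat misses the point of the corollary: you assert that $\pi_X' : X' \to D$ ``is still an elementary Lefschetz fibration in the sense of the paper'' with the same non-separating vanishing cycle. This is not the case. Blowing up at a regular point $x_i$ of the fibration introduces a new Lefschetz critical point on the fiber over $\pi_X(x_i)$: that fiber becomes the union of the proper transform of $F$ and the exceptional sphere, meeting transversally, which is a Lefschetz singular fiber whose vanishing cycle is a small (hence null-homotopic, in particular separating) loop around $x_i$. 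So $X'$ has $k+1$ critical points, is not elementary, and violates the paper's standing hypothesis of non-separating vanishing cycles. That is precisely why the corollary is not merely a special case of Theorem \ref{thm3} applied to $X'$, and why one needs the external input of blow-up invariance to transfer the computation from $X$. Your proof still works because its load-bearing step is the citation of Lemma 5.2, not the mistaken claim about $X'$; you should simply delete the sentence claiming $X'$ is elementary and instead emphasize that Lemma 5.2 identifies $PFC(X', \omega_{X'}, \Gamma_X)$ with $PFC(X, \omega_X, \Gamma_X)$ even though $X'$ falls outside the hypotheses of Theorem \ref{thm3}.
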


\begin{remark}
 In general, neither PFH nor its cobordism maps can be defined with  $\mathbb{Z}_2$-- or $\mathbb{Z}$--coefficients.  One needs certain  monotonicity  assumptions or introducing the local coefficient system (see \cite{H1}, \cite{LT}). But all the objects we are considering satisfy the  monotonicity  properties  if $Q\ne g(F)-1$. For details please see  Lemma 5.1 of \cite{H1} and Remark \ref{rmk1}.  We   use $\mathbb{Z}$--coefficient throughout. 
\end{remark}

\begin{remark}
	The assumption $Q\ne g(F)-1$ is only used to guarantee  that the   monotonicity properties are  true and it plays no role elsewhere.  Therefore,  the statement of part \ref{A} in Theorem \ref{thm3} is still true if we replace the $\mathbb{Z}$--coefficient  by a local coefficient system.   
\end{remark}

The following two remarks concern the relations between  $PFH(X, \omega_X)_J$ and the cobordism maps  $HP_{sw}(X, \Omega_X)$ defined in \cite{GHC}.  The construction of  $HP_{sw}(X, \Omega_X)$  is parallel to Hutchings and Taubes's results for embedded contact homology \cite{HT}. 
It relies heavily on the isomorphism “PFH=SWF” \cite{LT} and the Seiberg--Witten theory. 
\begin{remark}
Suppose that $Q>g(F) -1$ and the cobordism maps $PFH(X, \omega_X)_J$  are defined by using almost complex structures compatible with the symplectic form. (\cite{GHC} calls them cobordism--admissible.)  Then  we can show that    $PFH(X, \omega_X)_J$ agrees with the one $HP_{sw}(X, \Omega_X)$ defined in \cite{GHC}. In particular, $PFH(X, \omega_X)_J$ is independent of the choice of such a class of  almost complex structures.

The idea of the proof is to establish a 1--1 correspondence between the holomorphic curves and solutions to Seiberg--Witten equations perturbed by the symplectic form.   It requires many aspects of the Seiberg--Witten equations and argument from Taubes's series of papers \cite{T1}, \cite{T2}, so it has beyond the scope of this paper.  We refer the reader to Section 8 of \cite{GHC} where proves a parallel result for other  Lefschetz fibrations. \cite{GHC} summarizes the main argument of Taubes and explains why the argument can be adapted to our setting.  The  only difference here with \cite{GHC}  is that there are extra covers of holomorphic planes, called special holomorphic planes,  contributed to the cobordism maps.  To deal with such curves, the argument has already been carried out by C.Gerig \cite{CG2}.  For a  class of symplectic cobordisms, Gerig proves that  the cobordism maps on embedded contact homology defined by holomorphic curves agree  with the cobordism maps on Seiberg--Witten Floer cohomology  via the isomorphism “ECH=SWF” in \cite{T2}.

Note that  Gerig's  setup  differs from ours in the following two aspects: Firstly, the triple $(Y, \pi, \omega)$ is replaced by a contact 3--manifold in \cite{CG2}. Also, the periodic orbits are changed to be Reeb orbits. Secondly, the Seiberg--Witten equations are perturbed by $\omega$ and $\Omega_X$ in \cite{GHC} while they are changed to be the contact form and the  symplectic form of a  symplectic cap respectively. These changes only influence the proof of the ``$SW \Rightarrow Gr$ '' degeneration because the other arguments mainly  take place locally near the holomorphic curves. The  ``$SW \Rightarrow Gr$ '' degeneration in our setting has been proved in Proposition 5.12 of \cite{GHC}. Therefore, the proof in \cite{CG2} can be applied to our setup with only notational changes.
\end{remark}

\begin{remark}
For the case $Q \le g(F)-1$,  the author conjectures that the   conclusion in the above remark should be still true.  The only difference in the current  case is that we need to use  certain tame almost complex structures to define PFH (see 1.1.4 of \cite{LT}) and its cobordism maps; otherwise, the fibers of $Y$   violate the compactness of the moduli space. However, the techniques in Taubes's papers   require a compatible almost complex structure rather than a tame one. To overcome this issue,  a solution is provided by Lee and Taubes \cite{LT}.  For each tame almost complex structure $J$ in the symplectization of $Y$, they modify the symplectic form such that $J$ is compatible with this new symplectic form.   If their construction can be generalized to the cobordism case, then we may run the same argument as in the case $Q>g(F) -1$ to define $HP_{sw}(X, \Omega_X)$ and show that $HP_{sw}(X, \Omega_X)=PFH(X, \omega_X)_J$.

We   believe that  $PFH(X, \omega_X)_J$ is independent of the almost complex structures defined it. If $2Q \le g(F)-1$, then  we already see this  from the computation in Theorem \ref{thm3}.

\end{remark}

\paragraph{Motivations}There are several motivations for this paper. First, while PFH is defined in terms of holomorphic curves,  the cobordism maps  currently are only defined by using the   Seiberg--Witten  theory.  
Even we can use the Seiberg--Witten theory as a `black box' to construct the cobordism maps on PFH, it is still meaningful to understand the cobordism maps in terms of holomorphic curves.   
This could help with finding higher--dimensional analogues of
this invariant. Also,  in many applications, the holomorphic curves definition seems more  suited for computation.
Secondly, in the previous result \cite{GHC}, the author   defines the cobordism maps by using the holomorphic curve methods for fiberwise symplectic cobordisms satisfying  certain technical assumptions $(\spadesuit)$. The elementary Lefschetz fibrations are the simplest non--trivial examples that  do  not satisfy the assumptions $(\spadesuit)$.  Therefore, the results here  help with generalizing the results in \cite{GHC} to more situations. Thirdly, the computation here should help with computing the cobordism maps $PFH(W, \omega_W)_J$, where $(W, \pi_W, \omega_W) $ is the  fiberwise symplectic cobordism  constructed in \cite{PS1}. It is a Lefschetz fibration over an annulus with only one critical point. 

\paragraph{Idea of the proof} We end up the introduction  by  summarizing the   idea of the proof. First, we establish  a combinatorial formula for ECH index in Section \ref{section1}. From this index formula, we can see there are only a few generators that  can be mapped to non--zero.  Secondly,  there are special sections of $(X, \pi_X)$, called horizontal sections, whose positive ends are asymptotic to these potential generators. Thanks to  Seidel's observation,  these sections are holomorphic curves  for a suitable choice of  almost complex structures. Finally, the energy constraints ensure that the horizontal sections are the only holomorphic curves that  contributed to the cobordism maps. The above three ingredients lead to Theorem \ref{thm3}.


\ack{
This work is carried out at the   University of Adelaide,  the author acknowledges the comfortable environment provided by the School of Mathematics.
He is grateful to Prof.Yi--Jen Lee for suggesting this project.
He also wants to thank the anonymous referee, whose comments and  suggestions have greatly improved this paper.
The author also thanks   the support from ARC Discovery project grant DP170101054 with Chief Investigators Mathai Varghese and David
Baraglia.}
\section{Preliminaries}

\subsection{Review of ECH index and periodic Floer homology }
In this section, we   review    those aspects  of  periodic Floer homology that we need in the proof of Theorem \ref{thm3}. For more details, please refer to \cite{H3}.


\paragraph{Orbit set} Let $(Y,  \pi)$ be a surface fibration over the  circle together with  a Hamiltonian structure $\omega$.
The $2$--form $\omega$ gives a splitting of $TY=TY^{vert}\oplus TY^{hor}$, where $TY^{vert}= \ker \pi_*$ and $TY^{hor}$ is the $\omega$--orthogonal complement  of $TY^{vert}$.
The \emph{Reeb vector field} $R$ of $(Y, \pi, \omega)$ is characterized by  conditions:
\begin{center}
$R \in \Gamma(TY^{hor})$  and  $ \pi_*(R) =\partial_t $,
\end{center}
where  $\partial_t$  is the coordinate vector field of $S^1$.
A periodic orbit is a smooth map $\gamma: \mathbb{R}_{\tau} / q \mathbb{Z} \to Y $ satisfying the ODE $\partial_{\tau} \gamma =R \circ \gamma$ for some $q>0$. The number  $q$ is called the \emph{period or degree} of the periodic orbit. We say that a periodic orbit $\gamma$ is elliptic if the eigenvalues of the linearized return map $P_{\gamma}$ are on the unit circle, positive hyperbolic if the eigenvalues of $P_{\gamma}$ are real positive numbers, and negative hyperbolic if the eigenvalues of $P_{\gamma}$ are real negative numbers.

 An \emph{orbit set} $\alpha=\sum_i m_i \alpha_i$ is a finite formal sum of periodic orbits, where $\alpha_i $ are distinct, non--degenerate, irreducible embedded periodic orbits and $m_i$  are positive integers.
 An orbit set $\alpha$ is called an \emph{ECH generator} if $m_i=1$ whenever $\alpha_i$ is a hyperbolic orbit.  In the rest of the paper,  we write an orbit set using multiplicative notation $\alpha=\Pi_i \alpha_i^{m_i}$ instead of summation  notation.
The following definition is useful when we define the cobordism maps on PFH.

\begin{definition} (see \cite{H5} Definition 4.1)
Fix  $Q>0$.  Let  $\gamma$ be an embedded elliptic orbit with degree $q \le Q$.
\begin{itemize}
\item
$\gamma$ is called $Q$-positive elliptic if the rotation number $\theta  $ is in $ (0, \frac{q}{Q}) \mod 1$.
\item
$\gamma$ is called $Q$-negative elliptic if the rotation number $\theta $ is in $ ( -\frac{q}{Q},0) \mod 1$.
\end{itemize}
\end{definition}

\paragraph{The ECH index} Let  $(W , \pi_W, \omega_W)$ be a fiberwise symplectic  cobordism from  $(Y,  \pi, \omega)$ to $(Y',  \pi', \omega')$, where $(W, \pi_W)$ is a Lefschetz  fibration such that  $\pi_W^{-1}(\partial B)= Y \cup(-Y')$,  and $\omega_W $ is  a fiberwise symplectic  form which agrees with $\omega$ and $\omega'$ along $Y$ and $Y'$ respectively.

Given orbit sets $\alpha=\Pi_i  \alpha_i^{m_i}$ and $\beta=\Pi_j  \beta_j^{n_j}$ on $Y$ and $Y'$ respectively,  define the space of relative homology classes $H_2(W, \alpha ,\beta)$.  A  typical element in $H_2(W, \alpha ,\beta)$ is a $2$--chain $Z$ in $W $ such that $\partial Z = \sum_i m_i \alpha_i- \sum_j n_j \beta_j$, modulo the boundary of $3$--chains. Note that  $H_2(W, \alpha ,\beta)$ is an affine space over $H_2(W, \mathbb{Z})$.
Given $Z \in H_2(W, \alpha ,\beta)$ and  trivializations    $\tau$ of $\ker \pi_* \vert_{\alpha}$ and $\ker \pi_*' \vert_{\beta}$,  the ECH index is defined by
\begin{equation*}
I(\alpha, \beta, Z) = c_{\tau}(Z) + Q_{\tau}(Z) + \sum_i  \sum\limits_{p=1}^{m_i} CZ_{\tau}(\alpha_i^{p})- \sum_j \sum\limits_{q=1}^{n_j} CZ_{\tau}(\beta_j^{q}),
\end{equation*}
where $c_{\tau}(Z)$ and $Q_{\tau}(Z)$ are respectively the relative Chern number and the relative self--intersection number  (see \cite{H4} and \cite{H2}), and $CZ_{\tau}$ is the Conley--Zehnder index.  The  ECH index $I$  depends only on orbit sets $\alpha$, $\beta$, and the  relative homology class  $Z$.



\paragraph{Almost complex structure}  Recall that the symplectic form on $W$ is defined  by $\Omega_W=\omega_W+\pi^*\omega_B$, where $\omega_B$ is a large volume form of $B$. 
 We can define the \emph{symplectic completion} $(\overline{W}, \omega_W)$ by adding  cylindrical ends. Moreover, the fibration structure can be extended to the completion. More details can be found in  Section 2.3 of \cite{GHC}.
 To introduce holomorphic curves in $ \overline{W} $, we need the following definition of almost complex structures.
\begin{definition}
An almost complex structure $J$ on $\overline{W}$  is called $\Omega_W$--tame if $J$ satisfies the following properties:
\begin{enumerate}
\item
On the cylindrical ends, $J$ is $\mathbb{R}_s$--invariant and $J(\partial_s)=R$, where $R$ is the Reeb vector field. Also, we require that   $J $ sends $\ker \pi_*$ to itself along periodic orbits with degree less than  or equal to $Q$
\item
$J$ is $\Omega_W$--tame.
\item
Identify a neighbourhood  of a  critical point   of $\pi_W$ with the local model (\ref{eq18}). Then $J$ agrees with the standard complex structure $J_0$ in that neighbourhood.
\end{enumerate}
\end{definition}
The space of $\Omega_W$--tame  almost complex structures is denoted by $\mathcal{J}_{tame}(W, \omega_W)$. It carries a natural $C^{\infty}$ topology.

For the purpose of computing the cobordism maps, we require  a special class of almost complex structures that  preserve  the vertical and horizontal bundles.  Recall that the admissible 2--form  $\omega_W$ splits $TW$ into $TW^{hor}$ and $TW^{vert}$, where  $TW^{vert}=\ker \pi_{W*}$ and $TW^{hor}=\{v\in TW \vert v \bot_{\omega_W} TW^{vert} \}$.
Let $\overline{B}$ be the completion of $B$ by adding cylindrical ends. Let $(s, t)$ be the coordinates of the cylindrical ends. We fix a $\omega_B$--compatible complex structure $j_B$ such that $j_B(\partial_s) =\partial_t$ on the cylindrical ends.

\begin{definition}\label{def2}
Let $\mathcal{J}_h(W, \omega_W) \subset \mathcal{J}_{tame}(W, \omega_W)$ be  the subspace of $\Omega_W$--tame almost complex structures on $\overline{W}$ with the following properties:
\begin{enumerate}
\item
Away from critical points of $\pi_W$, $J(T\overline{W}^{hor}) \subset T\overline{W}^{hor}$ and $J(T\overline{W}^{vert}) \subset T\overline{W}^{vert}$.
\item
Away from critical points of $\pi_W$, $J \vert_{\ker \pi_{W*}}$ is compatible with $\omega_W$.
\item
$(\pi_W)_*$ is complex linear with respect to $J$ and $j_B$, i.e., $j_B \circ (\pi_W)_*=(\pi_W)_* \circ J$.
\end{enumerate}
\end{definition}
\begin{remark}
For a generic $J \in \mathcal{J}_h(W, \omega_W)$, all simple  holomorphic curves can achieve transversality except for the fibers and horizontal sections. See Proposition 21.1 of \cite{PS1}.  For the definition of horizontal sections, please refer to  Definition \ref{def1}.
\end{remark}
\textbf{Notation.} The definition of   $\mathcal{J}_h(W, \omega_W)$ depends  obviously  on the choice of $j_B$, but it plays no role in our argument.  To  simplify the notation, we suppress  $j_B$  from the notation.

\paragraph{Holomorphic currents}
Let $(\overline{W} , \pi_W, \omega_W)$ be  the symplectic completion of $(W , \pi_W, \omega_W)$.  Fix an   $\Omega_W$--tame almost complex structure $J$. A $J$--\emph{holomorphic current} $\mathcal{C}= \sum d_a C_a$  from $\alpha$ to $\beta$ is a finite formal sum of  $J$--holomorphic curves such that  $\mathcal{C}$ is asymptotic to $\alpha$ and $\beta$ respectively in the current sense, where  $C_a$  are distinct, irreducible, somewhere injective $J$--holomorphic curves with finite energy $\int_{C_a} \omega_W < \infty$ and $ d_a$  are positive integers.
The number
\begin{equation*}
 E(\mathcal{C})=\int_{\mathcal{C}} \omega_W =\sum_a d_a \int_{C_a} \omega_W
\end{equation*}
is called  $\omega_W$--\emph{energy}.  Let  $\mathcal{M}_i^J(\alpha, \beta, Z)$ denote the space of holomorphic currents from $\alpha$ to $\beta$ with ECH index $I=i$ and relative homology class $Z$.

\begin{definition}
 A holomorphic current $\mathcal{C}=\sum_a d_a C_a$ is called embedded if $d_a=1$ for any $a$ and $C_a$  are pairwise disjoint embedded holomorphic curves.
\end{definition}

\paragraph{Definition of periodic Floer homology }
Now let us return to the definition of PFH.  Fix an integer $Q$.  The chain complex $PFC(Y, \omega, Q)$ of PFH  is a free module  generated by  ECH generators with degree $Q$. Consider the special case that $\overline{W}=\mathbb{R} \times Y$ and $J$  is a generic $\mathbb{R}$--invariant $\Omega_W$--tame almost complex structure. The differential of PFH  is defined by
\begin{equation} \label{eq19}
<\partial \alpha, \beta> =  \sum_{Z \in H_{2}(Y, \alpha, \beta)} \left(\# {\mathcal{M}}^J_{1}(\alpha, \beta, Z)/\mathbb{R} \right)
\end{equation}
  The obstruction gluing  argument  in \cite{HT1} and \cite{HT2} show that $\partial^2=0$.   The homology of $(PFC(Y, \omega, Q), \partial)$ is called \emph{periodic Floer homology}, denoted by $PFH_*(Y, \omega, Q)$.

\paragraph{Cobordism maps on PFH}
Let  $(W , \pi_W, \omega_W)$ be a fiberwise symplectic cobordism from  $(Y,  \pi, \omega)$ to $(Y',  \pi', \omega')$. It is expected to define the cobordism maps in chain level by
\begin{equation} \label{eq2}
<PFC(W, \omega_W)_J \alpha, \beta  >= \sum_{Z \in H_{2}(W, \alpha, \beta)} \left(\# {\mathcal{M}}^J_{0}(\alpha, \beta, Z) \right) .
\end{equation}
However, the above formula doesn't make sense in general due to the  appearance of holomorphic currents with negative ECH index. The reasons are explained in Section 5.5 of \cite{H3}.
But in our special case, we show that it actually  works.

\subsection{  Elementary Lefschetz fibration} \label{section3}

In this subsection, we give more details about the elementary  Lefschetz fibration $(X, \pi_X, \omega_X)$.  To this end, let us first return to the exact Lefschetz fibration $(E, \pi_E, \omega_{\mathbb{C}^2} \vert_E)$. What follows here paraphrase  parts of the accounts in  \cite{PS1}, \cite{PS}.

Let $T=T^*S^1$ be the cotangent bundle of  the circle. The standard coordinates on  $T$ are
 $$T=\{ (u,v) \in \mathbb{R}^2 \times \mathbb{R}^2: <u, v>=0  \ \ |v|=1\}. $$
In terms of the coordinates,  the geodesic flow  $\sigma_t$  on  $T$ is
\begin{equation*}
\sigma_t(u ,v)= \left(\cos(2\pi t) u -\sin (2\pi t) |u| v,  \cos (2\pi t) v + \sin(2\pi t) \frac{u}{|u|}\right).
\end{equation*}
Sometimes it is more convenient to identify   $T$ with  a cylinder $\mathbb{R}_x \times  (\mathbb{R}_y/\mathbb{Z})$  via changing of coordinates
\begin{equation}\label{eq7}
u=ixe^{i2\pi y}, \ \ v=e^{i2\pi y}.
\end{equation}
Let $T_{\lambda}=\{(u, v) \in T \vert |u| \le \lambda\}$.  Let $V=\cup_{z \in D_{\delta}} V_z \cup {(0,0)} $ be the union of all the vanishing cycles (include $(0,0)$) of $E$, where $V_z =\sqrt{z}\{ (0, v) \in T \}\subset \pi_E^{-1}(z)$.   Away from $V$, $E$ can be identified with  a trivial bundle over a disk via the following diffeomorphism:
\begin{equation} \label{eq9}
\begin{split}
&\Phi: E-V \to D_{\delta}\times (T_{\lambda}-T_0)\\
&\Phi(\textbf{x})=(\pi(\textbf{x}),  \sigma_{\frac{t}{2}} (-Im(\hat{\textbf{x}}) |Re(\hat{\textbf{x}})|,  Re(\hat{\textbf{x}}) |Re(\hat{\textbf{x}})|^{-1} ) )=(\pi(\textbf{x}), \sigma_{\frac{t}{2}}(\Phi_2(\hat{\textbf{x}}))),
\end{split}
\end{equation}
where $\Phi_2(\textbf{x})=(-Im(\hat{\textbf{x}}) |Re(\hat{\textbf{x}})|,  Re(\hat{\textbf{x}}) |Re(\hat{\textbf{x}})|^{-1} ) )$   and  $\hat{\textbf{x}}= e^{-i\pi {t} }\textbf{x} $ provided that $\pi(\textbf{x}) =re^{2\pi it}$. $\Phi$ satisfies the following properties:
\begin{enumerate}
\item
$\Phi$ is a diffeomorphism on each fiber.
\item
$(\Phi^{-1})^* \theta_{\mathbb{C}^2}=\theta_T - \tilde{R}_r(|u|) dt,$ where $\theta_{\mathbb{C}^2 } =\frac{i}{4}\sum_{k=1}^2(z_k d\bar{z}_k - \bar{z}_k dz_k)$,   $\tilde{R}_r(t)=  \frac{t}{2} - \frac{1}{4} \sqrt{r^2 + 4 t^2}$ and  $\theta_T = xdy$ in terms of coordinates (\ref{eq7}).
\end{enumerate}

As mentioned in the Introduction, we need to modify $\omega_{\mathbb{C}^2}$ so that it is ``trivial''  near the horizontal boundary in the sense that it agrees with $d\theta_T$.      Fix $ 0< \delta_0 \ll 1$.    Take a cutoff function $g$ such that $g(t)=0$ near $t=0$ and $g(t)=1$ where $t \ge \lambda-\delta_0$. 
Define a 1--form by
 $$\theta_E= \Phi^*( \theta_T + (g(|x| ) -1) \tilde{R}_r(|x|) dt ).$$
Then the 2--form $\omega_E=d\theta_E$ satisfies the requirement.  

 Let $N(\partial_h E )= D_{\delta} \times ([-\lambda, -\lambda + \delta_0] \cup [\lambda- \delta_0, \lambda]) \times S^1 $ be a neighborhood of the horizontal boundary.   Let $(S, \omega_S)$ be a connected symplectic surface with boundary $\partial S =S^1 \cup (-S^1)$. A collar neighborhood of $\partial S $ is identified with $N(\partial S ) =  ([-\lambda, -\lambda + \delta_0] \cup [\lambda- \delta_0, \lambda]) \times S^1$.  Choose $\omega_S$ such that   $\omega_S \vert_{N(\partial S)} = d \theta_T$.  Take a trivial fiberwise  symplectic fibration over a disk $(D_{\delta} \times S, \omega_S)$. Then  $\pi_X: X \to D_{\delta}$ is obtained by gluing $E$ with $D_{\delta} \times S$  via identifying   $N(\partial_h E)$ and   $D_{\delta} \times N(\partial S)$.
 The admissible 2--form $\omega_X$ is obtained by  gluing   $\omega_E$ with $\omega_S$ in the obvious way. The fiber of $X$ is $F=S \cup T_{\lambda}$ and $Y=\partial_v E\cup( S \times S^1)$.

Lemma 18.4 of \cite{PS1} shows that the symplectic monodromy   $\phi: \pi^{-1}(\delta) \to  \pi^{-1}(\delta)  $  of $(E, \omega_E)$    is given by
\begin{equation*}
\phi(u, v)  = \left\{
\begin{aligned}
& \sigma_{ R_{\delta}'(|u|)} (u, v)  \   \mbox{ if $ u \ne 0$} \\
&  (0, -v)\  \mbox{ if $u=0$.}
\end{aligned}
\right.
\end{equation*}
Here $R_r(t)=(1-g(t))  \tilde{R}_r(t)$.
The above monodromy  is called the   \emph{Dehn twist}.  We  extend $\phi$  to be $Id$ over $S$.

Although $\phi$ depends on   the choices of $\delta, \lambda$, and the cutoff function $g$,  it is unique up to a Hamiltonian isotopy. (See Lemma 2.1 of \cite{PS1}.)
Since the periodic Floer homology is invariant under a    Hamiltonian isotopy of the monodromy (see Corollary 1.1 of \cite{LT}), we don't emphasize these choices unless otherwise stated.
In the rest of the paper, we always assume that $\delta=1$ and denote $R(t)=R_1(t)$. Also, we choose a suitable cutoff function $g$ such that $0 \le  R'(|t|)\le \frac{1}{2}$ and $R''(|t|) \le 0$.

The following lemma concerns the condition $\int_F \omega_X  \ge Q +1$ in Theorem \ref{thm3}. We can always construct an admissible 2--form satisfying this condition.
\begin{lemma} \label{lem1}
Given $L>0$, we can   find an admissible $2$--form $\omega_X$ such that $\int_F \omega_X>L$.
\end{lemma}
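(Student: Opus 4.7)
The plan is to keep the local model $(E, \omega_E)$ intact and enlarge only the form $\omega_S$ on the complementary trivial piece $D_\delta \times S$. Since the fiber decomposes as $F = S \cup T_\lambda$ and $\omega_X$ restricts to $\omega_S$ on $S$ and to $\omega_E|_{T_\lambda}$ on the vanishing-cycle piece, I first write
\begin{equation*}
\int_F \omega_X = \int_S \omega_S + \int_{T_\lambda} \omega_E.
\end{equation*}
The contribution $\int_{T_\lambda} \omega_E$ is fixed by the parameters $\lambda$, $\delta=1$, and the cut-off $g$ chosen earlier in the construction, so it suffices to produce area forms $\omega_S$ on $S$ with arbitrarily large total area and the prescribed boundary behavior $\omega_S|_{N(\partial S)} = d\theta_T$.

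To produce such $\omega_S$, I would start from any positive area form $\omega_S^0$ on $S$ that agrees with $d\theta_T$ on the collar $N(\partial S)$; the existence of such a reference form is immediate since $d\theta_T = dx \wedge dy$ is already a positive area form on the cylindrical collar and extends smoothly over the interior of $S$. Next, fix a smooth function $\chi\colon S \to [0,\infty)$ that vanishes in a neighborhood of $\partial S$ and is strictly positive on a region of positive $\omega_S^0$-area in the interior, and set
\begin{equation*}
\omega_S := (1 + c\chi)\,\omega_S^0
\end{equation*}
for a parameter $c > 0$. Since $1 + c\chi > 0$ everywhere, $\omega_S$ is a positive area form on $S$; since $\chi$ vanishes near $\partial S$, the boundary condition $\omega_S|_{N(\partial S)} = d\theta_T$ persists; and $\int_S \omega_S$ grows linearly in $c$ with positive slope, hence exceeds any prescribed threshold for $c$ large.

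Finally, I would glue $(E, \omega_E)$ and $(D_\delta \times S, \omega_S)$ along their horizontal boundaries exactly as in the construction reviewed above. The gluing is legitimate because both 2-forms restrict to $d\theta_T$ on the identified collar, the resulting $\omega_X$ is closed (each piece is closed and the boundary values agree) and fiberwise non-degenerate, hence admissible; choosing $c$ so that $\int_S \omega_S > L - \int_{T_\lambda} \omega_E$ yields $\int_F \omega_X > L$. The only subtlety is the boundary matching at $\partial_h E$, which is automatic thanks to the explicit normalization of $\omega_S^0$, so I do not anticipate any serious obstacle; the lemma is essentially a book-keeping step ensuring that the cohomological freedom in $\omega_S$ is enough to meet the degree condition $\int_F \omega_X > Q$ required by Theorem~\ref{thm3}.
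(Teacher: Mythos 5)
Your proof is correct, and it achieves the goal by a mechanism different from the paper's. The paper works with the primitive: it replaces $\theta_S$ by $\theta_S' = \theta_S + f\,dy$, where $f$ is supported in an enlarged collar and equals constants $c_1$, $c_2$ near the two boundary components, then reads off the area change as the Stokes boundary term $c_1 - c_2$. You instead bypass primitives entirely: starting from a reference form $\omega_S^0$ matching $d\theta_T$ on $N(\partial S)$, you rescale by $(1+c\chi)$ with $\chi\ge 0$ supported in the interior, so that the boundary normalization and positivity of the new form are manifest and the total fiber area grows linearly in $c$. Both routes are legitimate; yours is a bit more elementary (no Stokes argument, and non-degeneracy of the modified form is automatic, whereas the paper's $\omega_S + df\wedge dy$ implicitly requires a small sign check on $1+\partial_x f$ in the transition region), while the paper's modification fits naturally into its framework where everything is built from primitives $\theta_E$, $\theta_S$, $\theta_T$. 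One minor remark: your identity $\int_F \omega_X = \int_S \omega_S + \int_{T_\lambda}\omega_E$ treats $F$ as $S$ and $T_\lambda$ glued along boundary circles rather than overlapping collars; if one keeps the overlap there is a fixed correction term that cancels when taking differences, so the conclusion is unaffected.
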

\begin{proof}
Let $\omega_X$ be the admissible $2$--form  constructed before.  We   modify $\omega_X$ such that $\int_F \omega_X >L$    as follows.

Let $U= ([-\lambda-2\delta_0, -\lambda + \delta_0] \cup [\lambda- \delta_0, \lambda + 2\delta_0]) \times S_y^1$ be a collar neighborhood of     $\partial S $.
Take a function $f: S \to \mathbb{R}$ supported in $U$ such that  $f= c_1 $ on $ [\lambda-\delta_0, \lambda+ \delta_0] \times S^1 $ and $f= c_2 $ on  $( [ -\lambda-\delta_0, - \lambda +\delta_0] \times S^1 ) $,  where $c_1$ and $c_2$ are positive constants.  Write $\omega_S=d\theta_S$ for some 1--form $
\theta_S$. Define $\theta_S' = \theta_S + f dy$. Then we still have  $\omega_S'=d\theta_S'=\omega_E$ on $N(\partial S) $ by definition.   Thus we can glue $\omega_S'$  with $\omega_E$ together as before. The result is called $\omega'_X$.  By  Stokes’ theorem, we have
 \begin{equation*}
 \int_F \omega_X' =  \int_F \omega_X + c_1 -c_2.
 \end{equation*}
Take $c_1-c_2>L$.  Then we get the conclusion.
\end{proof}

%

\begin{lemma}  \label{lem2}
We have $H_2(X, \mathbb{Z})=\mathbb{Z}$. Moreover, the generator is represented by the fiber. 
\end{lemma}
\begin{proof}
Recall that $X$ is obtained by gluing $E$ and $D \times S$ along their horizontal boundaries. The conclusion follows from the  Mayer--Vietoris theorem for the singular homology.
\end{proof}

\subsection{Periodic orbits} \label{section2}
In this section, we describe the periodic orbits on $(Y, \pi, \omega)$.
Under the diffeomorphism (\ref{eq9}) and the coordinates (\ref{eq7}), the   Dehn twist is
\begin{equation*}
\phi(x, y)  = \left\{
\begin{aligned}
& (x, y+  R'(x))  \   \mbox{ if $ x > 0$} \\
& (x, y-  R'(-x))   \   \mbox{ if $ x < 0$} \\
&  (x, y \pm \frac{1}{2})\  \mbox{ if $x=0$.}
\end{aligned}
\right.
\end{equation*}


At each $x_0$ such that $R'(x_0) = \frac{p}{q}$,  the  torus $T_{x_0}=S_t \times\{x_0\} \times S_y$ is foliated by embedded periodic orbits. Each  periodic orbit is of the form $\gamma_{\frac{p}{q}}(\tau)=(\tau, x_0, y_0+ R'(x_0) \tau)=(\tau, x_0, y_0+ \frac{p}{q} \tau)$ if $x_0>0$ and $\gamma_{\frac{p}{q}}(\tau)=(\tau, x_0, y_0- R'(-x_0) \tau)= (\tau, x_0, y_0 +(\frac{p}{q}-1)\tau)$ if $x_0<0$ (When $x_0<0$, we write $R'(-x_0)=1-\frac{p}{q} $.), where $\tau \in  \mathbb{R}/( q\mathbb{Z})$. The integers $p, q$ here are relatively prime. The torus $T_{x_0}$ is called a  \emph{Morse--Bott torus}.

To ensure that the periodic orbits are non--degenerate, we need to perturb the admissible 2--form $\omega$.
    At each Morse--Bott torus $T_{x_0}$, we can  perform a standard  perturbation  such that there are only two periodic orbits   that  survive  (see \cite{FB}). One is elliptic and the other one is positive hyperbolic, denoted by $e_{\frac{p}{q}}$ and $h_{\frac{p}{q}}$ respectively.  These two periodic orbits are corresponding to the minimum and maximum of a perfect Morse function $f_{T_{x_0}}$ on the circle.   For  any fixed  integer $Q$,  we can arrange that all the periodic orbits on $\partial_v E$ with degree less than or equal to $Q$ are either $e_{\frac{p}{q}}$ or $h_{\frac{p}{q}}$.

 To describe  the periodic orbits in the trivial part $S^1 \times S$ of $Y$,  let $f_S: S \to \mathbb{R}$  be a   small Morse function such that $\nabla f_S $ is transversal to $\partial S$ and there are  critical points  $\{p_i\}_{i=0, 1}$ and  $\{q_i\}_{i=0, 1}$ on $N(\partial S) $ satisfying $\nabla^2 f_S(p_i)>0$ and $tr(\nabla^2 f_S(q_i)) =0$. In addition, there are no other critical points on $N(\partial S)$.
 For sufficiently small $f_S$,
 the periodic orbits with degree less than or equal to $Q$ over $(S^1 \times S, \omega_S + df_S \wedge dt)$ only consists of constant orbits at critical points of $f_S$.  We always use $a \in Crit(f_S)$ to denote the critical point  on $S - N(\partial S )$. If $tr(\nabla^2f_S (a)) \ne 0$,  then the corresponding periodic orbit is elliptic, denoted by $e_a$. Moreover, $e_a$ is either $Q$--positive or $Q$--negative depending on the sign of $\nabla^2 f_S(a)$ accordingly.  If $tr(\nabla^2f_S (a))=0$, then  the corresponding periodic orbit is positive hyperbolic, denoted by $h_a$. $\{e_i\}_{i=0, 1}$ and $\{h_i\}_{i=0, 1}$ are respectively elliptic orbits and hyperbolic orbits corresponding to $\{p_i\}_{i=0, 1}$ and  $\{q_i\}_{i=0, 1}$.

 In conclusion,  we can arrange that all the periodic orbits with degree less than or equal to $Q$ are either $e_{\frac{p}{q}}$ or $h_{\frac{p}{q}}$ or $e_a$ or $h_a$. Keep in mind that the periodic orbits here are either $Q$--positive elliptic or $Q$--negative elliptic or positive hyperbolic.

\begin{remark}
 In order to ensure that the elementary Lefschetz fibration is nonnegative in the  sense of Definition \ref{def1},  the   functions $f_{T_{x_0}}$ and $f_S: S\to \mathbb{R}$ are chosen to be non--negative and their minimums are zero.

 \end{remark}

\begin{remark}
The description here  in fact is the same as  \cite{H1}. The model of the Dehn twist using by Hutchings and  Sullivan is slightly different from ours here.  But we can transfer our model to theirs via changing of coordinates and  a Hamiltonian isotropy.

Define  $f: [-\lambda, \lambda ]_x \times S_y^1  \to [0,1]_s \times S^1_t$ by sending $(x, y) $ to $(\frac{x}{2\lambda} + \frac{1}{2}, y)$.  Let $H(x)=R(x)+ \frac{x^2}{4\lambda} - \frac{x}{2}$ be a Hamiltonian function. Let $\phi_H$ be the time--1 flow of the Hamiltonian vector field of $H$. Then
\begin{equation*}
f\circ \phi_H \circ \phi \circ f^{-1}(s, t) = \left\{
\begin{aligned}
& (s, t-s+1)  \   \mbox{ if $\frac{1}{2} \le s \le 1$} \\
&  (s, t-s ) \  \mbox{ if $0\le s <\frac{1}{2}$}
\end{aligned}
\right.=(s, t-s).
\end{equation*}
coincides with the model in \cite{H1}.
\end{remark}
%

For the purpose of computing the ECH index, we   want to express the periodic orbits in terms of coordinates $ \textbf{x} =(x_1, x_2 )$.  The result is summarized  in the following lemma.
\begin{lemma}
Let $x_0 \in [-\lambda, \lambda]$ such that $R'(x_0)=\frac{p}{q}$ and $\gamma_{\frac{p}{q}}$ is the periodic orbit at $x_0$. Let $\textbf{x}(\tau)=(x_1(\tau), x_2(\tau))=\Phi^{-1}(\gamma_{\frac{p}{q}})$, $\tau \in \mathbb{R} /(q \mathbb{Z})$.  We have the following two cases:
If $x_0 \ge 0$,  then
\begin{equation} \label{eq16}
\left\{\begin{split}
&x_1= \frac{1}{2} e^{h+iy_0} e^{2\pi i  \frac{p}{q} \tau} +  \frac{1}{2} e^{-h-iy_0} e^{2\pi i \frac{q-p}{q} \tau  }\\
&x_2=- \frac{i}{2} e^{h+iy_0} e^{2\pi i \frac{p}{q} \tau}+ \frac{i}{2} e^{-h-iy_0} e^{2\pi i  \frac{q-p}{q}\tau} ,
\end{split} \right.
\end{equation}
for some constant $h=h(x_0) \ge 0$. If  $x_0 \le 0$, then
\begin{equation} \label{eq17}
\left\{\begin{split}
&x_1= \frac{1}{2} e^{h-iy_0} e^{ 2\pi i \frac{q-p}{q} \tau } +  \frac{1}{2} e^{-h+iy_0} e^{2 \pi i \frac{p}{q} \tau }\\
&x_2=\frac{i}{2} e^{h-iy_0} e^{ 2\pi i  \frac{q-p}{q}\tau} - \frac{i}{2} e^{-h +iy_0} e^{2\pi i \frac{p}{q} \tau},
\end{split} \right.
\end{equation}
for some constant $h=h(x_0)\ge0$. In  both cases, $h=0$ if and only if $x_0=0$.
\end{lemma}

\begin{proof}

Firstly,  one can check that the geodesic flow $\sigma_t$   can be written as  
\begin{equation*}
\sigma_t(x, y) = \left\{
\begin{aligned}
& (x, y \pm t)  \   \mbox{ if $\pm x>0$} \\
&  (0, y \pm \frac{1}{2}) \  \mbox{ if $x=0$}.
\end{aligned}
\right.
\end{equation*}
 under the identification (\ref{eq7}).
Let $\textbf{x} \in E$ and $\hat{\textbf{x}}=e^{-i\pi {t}} \textbf{x} =\hat{p} + i\hat{q}$.  Write $\Phi_2(\hat{\textbf{x}})=(u, v) \in T$   as  $v= \frac{\hat{p}}{|\hat{p}|} = e^{i2\pi \theta}$ and $u=-\hat{q} |\hat{p}|=\pm |\hat{p}| |\hat{q}| i e^{i 2\pi \theta}$.

If $ \gamma_{\frac{p}{q}}(\tau)=(\tau, x_0, y_0+ \frac{p}{q} \tau)$ and $x_0>0$,  then
\begin{equation*}
\sigma_{\frac{t}{2}}(\Phi_2(\hat{\textbf{x}})) = \sigma_{\frac{t}{2}}(  -\hat{q} |\hat{p}|,   \frac{\hat{p}}{|\hat{p}|}  ) =(|\hat{p}| |\hat{q}|,    \theta + \frac{t}{2}   ).
\end{equation*}
 under the identification (\ref{eq7}).
Therefore, $t=\tau$, $\theta = y_0+ \frac{p}{q}\tau  - \frac{1}{2} t $ and $x_0=|\hat{p}| |\hat{q}|$.
By   relations  $|\hat{p}|^2 -|\hat{q}|^2=1$,  $<\hat{p}, \hat{q}>=0$ and $x_0^2=|\hat{p}|^2  |\hat{q}|^2  $, we get
\begin{equation}\label{eq5}
\begin{split}
&|\hat{p}|^2 =\sqrt{x_0^2 + \frac{1}{4}} + \frac{1}{2}\\
&|\hat{q}|^2 =\sqrt{x_0^2 + \frac{1}{4}} - \frac{1}{2}.
\end{split}
\end{equation}
Write $|\hat{p}|=\frac{e^h+ e^{-h}}{2}$ and  $|\hat{q}|=\frac{e^h-e^{-h}}{2}$.  Follows from the definition, we have
\begin{equation*}
\begin{split}
&p=\cos( \pi t) \hat{p} -\sin (\pi t ) \hat{q}\\
&q=\sin (\pi t) \hat{p} + \cos ( \pi t) \hat{q}.
\end{split}
\end{equation*}
Using the relations $\frac{\hat{p}}{|\hat{p}|} =e^{i 2\pi\theta}$ and $-\hat{q} |\hat{p}| =i |\hat{q} ||\hat{p}|e^{i2\pi\theta}$, we have
\begin{equation*}
\left\{	\begin{split}
		&p=\cos( \pi t) \hat{p} -\sin (\pi t ) \hat{q}   = \frac{1}{2} e^h e^{i 2\pi(\theta + \frac{t}{2})} + \frac{1}{2} e^{-h} e^{i 2\pi (\theta - \frac{t}{2})}\\
		&q=\sin (\pi t) \hat{p} + \cos ( \pi t) \hat{q} = \frac{i}{2} e^{-h} e^{i 2\pi(\theta - \frac{t}{2})} - \frac{i}{2} e^{h} e^{i2\pi(\theta + \frac{t}{2})}.\\
	\end{split} \right.
\end{equation*}
Then
\begin{equation*}
\left\{\begin{split}
&x_1= \frac{1}{2} e^h e^{i 2\pi (\theta + \frac{t}{2})} +  \frac{1}{2} e^{-h} e^{-i2\pi(\theta - \frac{t}{2})}\\
&x_2=\frac{i}{2} e^{-h} e^{-i 2\pi (\theta - \frac{t}{2})} - \frac{i}{2} e^{h} e^{i2\pi(\theta + \frac{t}{2})}.\\
\end{split} \right. 
\end{equation*}
Replace  $\theta$ by $y_0+ \frac{p}{q}\tau - \frac{1}{2} t $ and $t=\tau$, then we get the result.

For the case that $ \gamma_{\frac{p}{q}}(\tau)=(\tau, x_0, y_0+ (\frac{p}{q}-1) \tau)$ and $x_0<0$, the argument is similar. We left the details to the reader.
We can regard the  periodic orbit at $x_0=0$ as a limit of the periodic orbits at $x \ne 0$ and $x \to 0$. The periodic orbit at $x_0=0$ is
\begin{equation} 
\left\{
\begin{split}
&x_1= \frac{1}{2}  e^{i (-y_0+ \frac{1}{2} \tau  )} +  \frac{1}{2}  e^{i(y_0 +\frac{1}{2} \tau )}\\
&x_2=\frac{i}{2}  e^{i (-y_0 + \frac{1}{2}\tau )} - \frac{i}{2}  e^{i(y_0 + \frac{1}{2} \tau)}.\\
\end{split}\right.
\end{equation} 
\end{proof}



\section{A formula for the ECH index}\label{section1}
In this section, we deduce a combinatorial  formula for the ECH index as in \cite{H1}, \cite{H5}.  There may be other smarter ways to compute the ECH index, but here we construct several surfaces in $X$ explicitly  and then compute the  ECH index   directly from the  definition.

First, let us consider the ECH index of relative homology classes in $E$. Note that  $H_2(E, \mathbb{Z})=0$,  there is a  unique element $Z_{\alpha}$ in $H_2(E, \alpha)$ for each orbit set $\alpha$.  Therefore, we denote the ECH index,  the relative Chern number, and the relative self--intersection number   by $I(\alpha)$, $c_{\tau}(\alpha)$ and $Q_{\tau}(\alpha)$ respectively. The result on the ECH index  is as follows.
\begin{theorem}\label{thm1}
Let $\pi: E \to D$ be the exact Lefschetz fibration defined in Introduction and orbit set $\alpha=\Pi_i \gamma_{\frac{p_i}{q_i}}$  satisfying $\frac{p_i}{q_i} \ge \frac{p_j}{q_j}$ for $i \le j$.   Then the ECH index is
\begin{equation}
I(\alpha)= Q+ P(Q-P) - \sum_{i<j} (p_iq_j -p_jq_i) - e(\alpha),
\end{equation}
where $e(\alpha)$ is the total multilpity  of elliptic orbits  in $\alpha$, $P=\sum_i p_i$ and $Q=\sum_i q_i$.
\end{theorem}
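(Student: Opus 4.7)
The plan is to compute each of the three pieces of the ECH index separately, using the explicit parametrization of the orbits from Lemma \ref{lem16} to fix a trivialization, and choosing a reference relative 2-chain that makes the Chern number and self-intersection tractable. Since the theorem is stated for $\alpha$ alone (there is no negative end on $E$ viewed as a Lefschetz fibration over $D$), we first need to fix a relative homology class $Z\in H_2(E,\alpha)$; using that $E$ is star-shaped (cf.\ the proof of Lemma \ref{lem2}, where $E$ was shown to deformation retract to the origin) we can take $Z$ to be the union, over $i$, of the ``Lefschetz thimbles'' that sweep each $\gamma_{p_i/q_i}$ to the critical point along a radial ray in $D$, possibly capped off by portions of a single reference fiber. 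The ECH index is independent of this choice up to the relation $I(Z+[F])=I(Z)$, so any consistent such $Z$ works.

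Next I would compute the three ingredients in turn. For the Conley--Zehnder sum, the formulas \eqref{eq16}--\eqref{eq17} display each orbit as a product of two circles rotating at speeds $p/q$ and $(q-p)/q$, so the linearized return map along $\gamma_{p/q}$ has rotation number determined by $R''(x_0)$ together with the small Morse perturbation of section 4.2; after the Morse--Bott perturbation the surviving elliptic orbit $e_{p/q}$ (resp.\ hyperbolic $h_{p/q}$) has $CZ_\tau(\gamma_{p/q}^k)$ equal to the standard expression $2\lfloor k\theta\rfloor+1$ (resp.\ $2\lfloor k\theta\rfloor$), and one can check that for the iterates present in $\alpha$ this reduces to a linear function of $p,q$ that contributes exactly the $-e(\alpha)$ offset (the $+1$ per elliptic orbit) plus a piece absorbed into $Q$. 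For $c_\tau(Z)$, the vertical tangent bundle $\ker\pi_*$ is trivial on $E\setminus\{0\}$ because the thimbles are contractible and the complex structure on the fiber trivializes; the only contribution is a boundary term coming from how the tautological trivialization of $\ker\pi_*$ along $\gamma_{p/q}$ compares to the constant trivialization coming from $\partial_{x_1},\partial_{x_2}$, which by direct inspection of \eqref{eq16}--\eqref{eq17} yields a contribution proportional to $P$.

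The main computation, and what I expect to be the hardest step, is the relative self-intersection $Q_\tau(Z)$. Two thimbles for $\gamma_{p_i/q_i}$ and $\gamma_{p_j/q_j}$ are disjoint in the interior of $E$ if we push them into disjoint radial rays, so the entire contribution concentrates on the vertical boundary, where it becomes an asymptotic linking number of $\gamma_{p_i/q_i}$ with $\gamma_{p_j/q_j}$ on the corresponding Morse--Bott torus, taken with respect to $\tau$. The orbits are torus knots of slopes $p_i/q_i$ and $p_j/q_j$ on nearby tori in $\partial_v E$, and their linking number in the mapping-torus framing is exactly the determinant $p_iq_j - p_jq_i$ (or its absolute value, with a sign fixed by the convention $\frac{p_i}{q_i}\ge\frac{p_j}{q_j}$ for $i\le j$). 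The diagonal terms combine to give $P(Q-P)$ by a similar self-linking computation for the $k$-fold iterate, where the $k$-th strand writhes around the central strand with speed $p/q$ relative to $\tau$.

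Putting these together, the expected identity
\begin{equation*}
I(\alpha) = c_\tau(Z) + Q_\tau(Z) + \sum_i\sum_{k=1}^{q_i}CZ_\tau(\gamma_{p_i/q_i}^k) = Q + P(Q-P) - \sum_{i<j}(p_iq_j - p_jq_i) - e(\alpha)
\end{equation*}
should fall out after collecting the $Q$ from the CZ sum, the $P(Q-P)$ from the diagonal part of $Q_\tau$, the off-diagonal linking contributions from $Q_\tau$, and the $-e(\alpha)$ correction from the elliptic/hyperbolic $\pm 1$ discrepancy in CZ. The bookkeeping of signs is delicate because the two cases $x_0>0$ and $x_0<0$ in Lemma \ref{lem16} produce different rotation speeds ($p/q$ vs.\ $(q-p)/q$), and the convention that orbits are listed in decreasing order of slope is what makes the cross-term sum positive; verifying that these signs line up consistently with the convention for $Q_\tau$ used by Hutchings is the bookkeeping crux of the argument.
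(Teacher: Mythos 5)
Your overall template — decompose $I$ as $c_\tau + Q_\tau + \sum CZ_\tau$ and evaluate each piece — is the same as the paper's, but two of the three pieces come out wrong, and the bookkeeping as you have arranged it cannot produce the stated formula.

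First, the relative Chern number. You assert that $c_\tau(Z)$ is ``a contribution proportional to $P$.'' It is not: the paper's Lemma \ref{lem10} shows $c_\tau(\gamma_{p/q}) = q$ and hence $c_\tau(\alpha)=Q$. The proof constructs an explicit nonvanishing holomorphic $(2,0)$-form $\psi = 2i\bar z\, dx_1\wedge dx_2$ near the critical point and counts the vanishing order of $\psi$ along the surfaces (\ref{eq3})--(\ref{eq4}) at $z=0$, which is $q$, not $p$. Your misassignment then forces the standalone $Q$ in the final answer to come out of the Conley--Zehnder sum, which is also wrong: the CZ sum contributes exactly $-e(\alpha)$, because $R''<0$ together with the Morse--Bott perturbation makes $CZ_\tau(e^k_{p/q})=-1$ for every $k\le Q$ and $CZ_\tau(h_{p/q})=0$ (equation (\ref{eq6})); there is no ``piece absorbed into $Q$.''

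Second, the relative self-intersection. Your decomposition ``diagonal gives $P(Q-P)$, off-diagonal gives $p_iq_j-p_jq_i$'' does not reproduce the formula. One has
\begin{equation*}
P(Q-P)-\sum_{i<j}(p_iq_j-p_jq_i) \;=\; \sum_i p_i(q_i-p_i) \;+\; 2\sum_{i<j}\bigl(p_jq_i-p_ip_j\bigr),
\end{equation*}
so the correct diagonal term is $Q_\tau(\gamma_{p_i/q_i})=p_i(q_i-p_i)$ and the correct off-diagonal term is $Q_\tau(\gamma_{p_i/q_i},\gamma_{p_j/q_j})=\min\{p_i(q_j-p_j),\,p_j(q_i-p_i)\}=p_jq_i-p_ip_j$ (for $i<j$ with the ordering convention), not the determinant. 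Moreover, your claim that ``the entire contribution concentrates on the vertical boundary'' as a linking number is unjustified in this geometry: the reference surfaces must cap the orbits off \emph{inside} $E$ and inevitably pass through a neighborhood of the Lefschetz critical point, where they genuinely intersect. The paper computes $Q_\tau$ precisely by constructing the surfaces $u,v$ of (\ref{eq3})--(\ref{eq4}), isolating the intersection points near $z=0$ (Lemma \ref{lem11}), and counting solutions to (\ref{eq11})--(\ref{eq13}) via Bernshtein's theorem — a concretely different, and necessary, route. Finally, a small point: since $H_2(E,\mathbb{Z})=0$, the class $Z\in H_2(E,\alpha)$ is literally unique; the statement ``$I(Z+[F])=I(Z)$'' is both unnecessary here and false in the setting of $X$, where $I$ shifts by $2m(Q+1-g(F))$ as in (\ref{eq14}).
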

\begin{proof}
We compute the quantities $c_{\tau}$, $Q_{\tau}$ and  $CZ_{\tau}$ in Lemmas \ref{lem9}, \ref{lem10} and (\ref{eq6}) respectively. Their proof will appear in the upcoming subsection.
\end{proof}


We can follow \cite{H1} to rewrite the above formula for ECH index in the following way.  Let $w_j=\sum_{i=0}^j(p_i, q_i)$.  Let  $\mathcal{P}(\alpha)$  be the convex path in the plane consisting of straight line segments  between the points $w_{j-1}$ and $w_j$, oriented so that the origin is the initial endpoint. 
Let $\Lambda_{\alpha}$ be the region in the plane which is enclosed by  $\mathcal{P}(\alpha)$  and the line segment from $(0,0) $ to $(P, 0)$ and the line segment from $(P,0) $ to $(P, Q)$.  The area of  $\Lambda_{\alpha}$  is
$$ 2Area(\Lambda_{\alpha}) =PQ  -\sum_{i<j} (p_iq_j -p_jq_i).$$
Therefore, we can rewrite the ECH index   as
\begin{equation}
I(\alpha)= Q+ 2 Area(\Lambda_{\alpha}) -P^2 - e(\alpha).
\end{equation}

\begin{remark}
The   additive property of the ECH index implies that the ECH index of a  relative class in $\mathbb{R} \times \partial_v E$ is   $I(\alpha) -I(\beta)$.  It agrees with the index computation (Proposition 3.2) in \cite{H1}.
\end{remark}

\begin{corollary}
$I(\alpha) \ge 0$ and equality holds if and only if  $\alpha=e_1^m  e_0^n$ for some $m, n \ge 0$.
\end{corollary}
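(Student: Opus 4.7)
The plan is to exploit the area form of the index,
\[
I(\alpha) = Q + 2\,\mathrm{Area}(\Lambda_\alpha) - P^2 - e(\alpha),
\]
from Theorem~\ref{thm1}, rearranging it into two manifestly non-negative pieces. The crucial geometric input I would verify first is that every segment of the convex path $\mathcal{P}(\alpha)$ lies on or above the diagonal $y = x$: the normalization $0 \le R'(|x|) \le \tfrac{1}{2}$ from Section~3.1 forces the slope $p_i/q_i$ of each orbit $\gamma_{p_i/q_i}$ to lie in $[0,1]$ (it equals $R'(x_0)$ for $x_0 > 0$ and $1 - R'(|x_0|)$ for $x_0 < 0$). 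Hence the reciprocal slopes $q_i/p_i \ge 1$ of the path segments are non-decreasing, and the path from $(0,0)$ to $(P,Q)$ stays on or above the diagonal.

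Consequently, the triangle $T$ with vertices $(0,0), (P,0), (P,P)$ is contained in $\Lambda_\alpha$ and has area $P^2/2$. Setting $\widetilde{\Lambda}_\alpha := \Lambda_\alpha \setminus T$, the formula rewrites as
\[
I(\alpha) = \bigl(Q - e(\alpha)\bigr) + 2\,\mathrm{Area}(\widetilde{\Lambda}_\alpha).
\]
The second summand is manifestly non-negative. For the first, I would split $\alpha$ into its elliptic part (orbits $e_{p_k/q_k}$ with multiplicities $m_k$) and its hyperbolic part (each $h_{p_\ell/q_\ell}$ with multiplicity one) to obtain $Q - e(\alpha) = \sum_k m_k(q_k - 1) + \sum_\ell q_\ell \ge 0$, since every $q_i \ge 1$. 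This already yields $I(\alpha) \ge 0$.

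For equality, both summands must vanish. Vanishing of $Q - e(\alpha)$ forces every orbit to be elliptic with $q_i = 1$, so $(p_i, q_i) \in \{(0,1), (1,1)\}$; these are precisely the orbits $e_0$ and $e_1$. For such an $\alpha = e_1^m e_0^n$, ordered with the slope-$1$ vectors first, the path traces $(0,0) \to (m,m) \to (m, m+n)$, which follows the diagonal and then runs vertically, so $\mathrm{Area}(\widetilde{\Lambda}_\alpha) = 0$ automatically. The one substantive geometric input is the slope bound $p_i/q_i \le 1$; once that is in hand, both inequalities are elementary and the combinatorial equality analysis is immediate, so I do not foresee any further obstacles.
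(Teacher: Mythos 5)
Your proof is correct and follows essentially the same route as the paper: both use the area form of the index, the containment of the triangle of area $P^2/2$ inside $\Lambda_\alpha$ (you state the vertices as $(0,0),(P,0),(P,P)$, which corrects an apparent typo in the paper's $(0,0),(0,P),(P,P)$), and the bound $e(\alpha) \le Q$. Your explicit split of $Q - e(\alpha)$ into elliptic and hyperbolic contributions is a minor elaboration of the paper's terse equality analysis, not a different argument.
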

\begin{proof}
Since $0 \le \frac{p_i}{q_i} \le 1$, the triangle determined by $(0, 0)$, $(0, P)$ and $(P, P)$ is inside the region $\Lambda_{\alpha}$. Therefore, $P^2 \le 2 Area(\Lambda_{\alpha})$. By definition, $e(\alpha)  \le Q$. Therefore, $I (\alpha) \ge 0$.  The equality holds if and only if  $Q=e(\alpha)$ and $P^2 = 2 Area(\Lambda_{\alpha})$.  The only  possibility is that  $\alpha=e_1^m  e_0^n$.
\end{proof}

Now let us consider the ECH index of relative homology classes in the ``trivial'' part  $D\times S$. The result is as follows.
\begin{lemma} \label{lem4}
Let  $a$ be a  critical point of $f_S$.  Define $C_a =\{a\} \times D$.  Let $m_a$ be  a non--negative integer.   Then we have  the following three cases:
\begin{itemize}
\item
If $\nabla^2f_S (a)>0$, then the corresponding periodic orbit $e_a$ is elliptic with Conley--Zehnder index $CZ_{\tau}(e_a)=-1$.  In addition, $I(m_a C_a)=0$.
\item
  If $\nabla^2f_S (a)<0$, then the corresponding periodic orbit $e_a$ is elliptic with Conley--Zehnder index $CZ_{\tau}(e_a)=1$.  In addition, $I(m_a C_a)=2 m_a$.
 \item
 Finally, if $tr(\nabla^2f_S (a))=0$, then  the corresponding periodic orbit $h_a$ is positive hyperbolic with Conley--Zehnder index $CZ_{\tau}(e_a)=0$.   In addition, $I(m_a C_a)= m_a$. 
\end{itemize}
Here  $\tau$ is  the  trivialization  from a fixed  trivialization of  $ TS$ and the standard trivialization of $TD$.
\end{lemma}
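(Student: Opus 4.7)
The plan is to split $I(m_a C_a) = c_\tau(m_a C_a) + Q_\tau(m_a C_a) + \sum_{p=1}^{m_a} CZ_\tau(\gamma^p)$ (with $\gamma = e_a$ or $h_a$) into its three ingredients and evaluate each. The starting point is the local analysis of the perturbed Reeb field near the constant orbit $a \times S^1$: in Darboux coordinates $(u,v)$ near $a$ with $\omega_S = du \wedge dv$ and $f = \tfrac{1}{2}\langle Hu, u\rangle + O(|u|^3)$ for $H := \nabla^2 f(a)$, the conditions $\iota_{R^f}\omega_S^f = 0$ and $(\pi_X)_* R^f = \partial_t$ force $R^f = \partial_t + X_f$ on the fiber, with linearization at $a$ the matrix $J_0 H$, where $J_0$ is the complex structure determined by $\omega_S$. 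Its spectrum is $\{\pm\sqrt{-\det H}\}$, which is purely imaginary when $H$ is definite (so $\gamma = e_a$ is elliptic) and real when $H$ is indefinite (so $\gamma = h_a$ is hyperbolic); the latter is necessarily \emph{positive} hyperbolic when $\mathrm{tr}\,H = 0$, because the exponential of a trace-zero real matrix with real eigenvalues has positive real eigenvalues.

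Second, I would apply the standard Morse--Bott perturbation formula for the Conley--Zehnder index in a three-dimensional mapping torus, according to which an orbit emerging from a Bott surface of constant orbits perturbed by a Morse function $f$ satisfies $CZ_\tau(\gamma) = \mathrm{ind}_f(a) - 1$, where $\mathrm{ind}_f(a)$ is the Morse index. This yields the stated values $-1$, $0$, $+1$ for minima, saddles and maxima respectively. Choosing $f$ arbitrarily $C^2$-small makes the rotation number $\theta(a)$ of an elliptic $e_a$ likewise small, placing $\theta(a)$ in $(-1/Q, 0)$ or $(0, 1/Q)$ according to the sign of $H$, so every iterate $\gamma^p$ with $p \le m_a \le Q$ has the same index; the Conley--Zehnder sum therefore contributes $-m_a$, $0$, $+m_a$ in the three cases.

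Third, I would compute $c_\tau(m_a C_a)$ and $Q_\tau(m_a C_a)$ directly from the product structure. Because $C_a = \{a\} \times D$ lies entirely in the trivial part $D \times S$ of $X$, the vertical tangent bundle along $C_a$ is the constant line $T_a S$, and the $m_a$ parallel perturbed disks $\{a_i\} \times D$ at nearby critical points of $f$ are pairwise disjoint. A direct computation converting between the constant trivialization of $T^{vert}$ along $\{a\} \times S^1$ and the dynamical trivialization $\tau$ in which the Conley--Zehnder formula of step two is written yields $c_\tau(m_a C_a) + Q_\tau(m_a C_a) = m_a$ in all three cases; summing with the Conley--Zehnder contribution then gives $I(m_a C_a) = 0, m_a, 2m_a$ respectively.

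The step I expect to be the main obstacle is the combined $c_\tau + Q_\tau$ calculation in step three: the writhe of the parallel iterates at the boundary and the winding of the dynamical trivialization relative to the constant one must both be tracked carefully, and one must verify that their sum is exactly $m_a$ independently of whether the boundary orbit is $Q$-positive elliptic, $Q$-negative elliptic, or positive hyperbolic. A related subtlety in step two is pinning down the sign of the rotation number $\theta(a)$ so as to match the $Q$-negative versus $Q$-positive convention used earlier in the paper.
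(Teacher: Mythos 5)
Your proposal is correct and follows essentially the same route as the paper's proof: split $I(m_a C_a)$ into $c_\tau + Q_\tau + \sum_p CZ_\tau$ and evaluate each piece in the canonical trivialization $\tau$ on $S^1\times S$. The paper records the two topological pieces separately as $c_\tau(C_a)=1$ and $Q_\tau(C_a)=0$ (so $c_\tau(m_aC_a)+Q_\tau(m_aC_a)=m_a+0$, agreeing with your combined step three) and reads the Conley--Zehnder values $-1,0,1$ off the sign of the rotation number and the $Q$-positive/$Q$-negative definitions rather than citing the Morse--Bott formula $CZ_\tau=\operatorname{ind}_f(a)-1$; since that trivialization is already the constant product one, the ``conversion between the constant and dynamical trivializations'' you flag as the main obstacle is not actually needed.
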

\begin{proof}
Let $(x, y)$ be  local coordinates at $a$. 
It is easy to check that the linearized  Reeb  flow is
\begin{equation}\label{eq20}
\left(
  \begin{array}{ccc}
    \dot{x}   \\
    \dot{y}  \\
  \end{array}
\right)=
 \left(
  \begin{array}{ccc}
    f_{Sxy}(a) & f_{Syy}(a) \\
    -f_{Sxx}(a) & f_{Sxy}(a) \\
  \end{array}
\right)\left(
  \begin{array}{ccc}
    x   \\
    y    \\
  \end{array}
\right).
\end{equation}
By (\ref{eq20}), we know that if   $tr(\nabla^2f_S (a)) \ne 0$,  then it is an elliptic orbit. Otherwise, it is a positive hyperbolic orbit. Also, when  $tr(\nabla^2f_S (a)) \ne 0$, the sign of rotation number of $e_a$ is equal to $-sign( \nabla^2f_S (a) )$.  In conclusion, $e_a$ is $Q$--negative if  $\nabla^2f_S (a) >0$  and  $e_a$ is $Q$--positive if  $\nabla^2f_S (a)<0$,

Finally, it is easy to check that $c_{\tau}(C_a)=1$ and $Q_{\tau}(C_a)=0$ from the definition. These  two ingredients  lead to the statements of the lemma.
\end{proof}

Given an orbit set $\alpha=\Pi_i  \alpha_i^{m_i}$, we define a reference class  ${Z}_{\alpha}=\sum_i m_i[ S_i]  \in H_2(X, \alpha)$,  where $S_i$ are surfaces defined as follows. If $\alpha_i=\gamma_{\frac{p}{q}}$,  then we define a surface $S_i $ to be the image of $u$ in (\ref{eq3}) or (\ref{eq4}) accordingly.  If $\alpha_i$ is a critical point $a$ of $f_S$, then we define $S_i$ by $S_i= \{a\} \times D$. It is worth noting that $u$ doesn't intersect $\{a\} \times D$. Write $\alpha = (\Pi_a \gamma_a^{m_a}) \Pi_i\gamma_{\frac{p_i}{q_i}}$, where $\gamma_a$ are the constant periodic orbits at critical points $a$. Then we have 
\begin{equation*}
I(Z_{\alpha}) = \sum_a I(m_a(\{a\} \times D)) +  I(\Pi_i \gamma_{\frac{p_i}{q_i}}).
\end{equation*}
Recall that $ H_2(X, \alpha)$ is an affine space over $H_2(X, \mathbb{Z})$. By lemma \ref{lem2}, any $Z\in H_2(X, \alpha)$ is of the form $Z=Z_{\alpha} +m [F]$.   Let us denote $I_m(\alpha)=I(Z_{\alpha} +m [F])$.  Therefore, in general we have
\begin{equation} \label{eq14}
I_m(\alpha )= I_0(\alpha)+ 2m(Q+1- g(F)).
\end{equation}

\subsection{Proof of Theorem \ref{thm1}}
In this subsection, we compute the relative Chern number $c_{\tau}$, the relative self intersection number $Q_{\tau}$ and the  Conley--Zehnder index $CZ_{\tau}$. Theorem \ref{thm1} follows directly from these computations.

\paragraph{Trivialization}
Let us first clarify the trivialization of  what we are using.  Restrict the map $\Phi$ (\ref{eq9}) on the boundary $\partial_v E$, we have a diffeomorphism   $\Phi : \partial_v E  -V \to S^1_t \times (T_{\lambda }-T_0) = S^1_t \times ( [-\lambda,0)\cup (0 , \lambda] ) \times S^1_y $.  The tangent bundle of $T_{\lambda}=[-\lambda, \lambda]_x \times S^1_y$ has a canonical trivialization. Using  $\Phi$ to pull back  this canonical trivialization, we get a trivialization $\tau$ along   each periodic orbit at $x \ne 0$.

Let us write  $\partial_v E - V= \partial_v E^+ \cup \partial_v E^-$ and $\Phi_{\pm} = \Phi \vert_{ \partial_v E^{\pm}}$.  $\Phi_+$ and $\Phi_-$ do not match at $V$ and thus $\Phi$ cannot be extended to the  whole $\partial_v E$.   But  $\Phi_+ $ and $\Phi_-$ can extend to  $\overline{\partial_v E^{\pm}} $ and give  trivializations  $\Phi_{+}: \overline{ \partial_v E^+} \to  S^1_t \times [0, \lambda]_x \times S^1_y$ and  $\Phi_{-}: \overline{ \partial_v E^- }\to  S^1_t \times [-\lambda, 0]_x \times S^1_y$ respectively. Using  $\Phi_{\pm}$ to pull back  the canonical trivialization, hence there are two trivializations $\tau_{\pm}$ along the periodic orbits at $x_0=0$. But there is no difference between using $\tau_+$ and $\tau_-$ when we compute $c_{\tau}, Q_{\tau}, CZ_{\tau}$. So we just use the  same notation $\tau$ to denote one of them.

\paragraph{Conley--Zehnder index }
Since $R''(x)< 0$,   the elliptic orbit $e_{\frac{p}{q}}$  has small negative rotation number with respect to the trivialization $\tau$. 
As long as the perturbation  $f_{T_{x_0}}$ is  small enough,  we have
\begin{equation}\label{eq6}
CZ_{\tau}(e^k_{\frac{p}{q}}) = -1,  \ \ \ CZ_{\tau}(h_{\frac{p}{q}}) = 0,
\end{equation}
for any $k \le Q$.

\paragraph{Relative intersection number }
The result about the relative self--intersection is as follows:
\begin{lemma}\label{lem9}
For $0 \le  \frac{p}{q}, \frac{p'}{q'} \le 1$, then $Q_{\tau}(\gamma_{\frac{p}{q}}, \gamma_{\frac{p'}{q'}}) =  \min\{ p(q'-p'), p'(q-p)\}$.  Assume that $\alpha=\Pi_i \gamma_{\frac{p_i}{q_i}}$ and $\frac{p_i}{q_i} \ge \frac{p_j}{q_j}$ for $i \le j$. Then
\begin{equation*}
Q_{\tau}(\alpha)=P(Q-P)  - \sum_{i<j}( p_iq_j -p_jq_i),
\end{equation*}
where $P =\sum_ip_i$ and $Q=\sum_i q_i$.
\end{lemma}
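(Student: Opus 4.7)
The plan is to realize each orbit $\gamma_{p/q}$ as a torus knot in the local model $\mathbb{C}^2$ and compute $Q_\tau$ as a classical linking number. I would first introduce the complex coordinates $w_1 = x_1 + ix_2$, $w_2 = x_1 - ix_2$ on $\mathbb{C}^2$, in which $\pi(w_1, w_2) = w_1 w_2$ and the parametrization from Lemma \ref{lem16} (for $x_0 \ge 0$) collapses to
\[
w_1(\tau) = e^{h+iy_0}\, e^{2\pi i p \tau/q}, \qquad w_2(\tau) = e^{-h-iy_0}\, e^{2\pi i (q-p)\tau/q}.
\]
Thus $\gamma_{p/q}$ is a $(p, q-p)$ torus knot on the standard torus $T_h = \{|w_1| = e^h,\ |w_2| = e^{-h}\}$ inside $E$, while the case $x_0 \le 0$ is symmetric with the roles of $p$ and $q-p$ swapped. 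An explicit Seifert surface is obtained by letting $h'$ run through $[0, h]$ and filling in through the vanishing cycle, so $Q_\tau$ may be computed as an intersection number of such surfaces, equivalently as a linking number in $Y$ of $\tau$-framed push-offs.

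The core calculation then splits into diagonal and off-diagonal contributions. For a single orbit I would show $Q_\tau(\gamma_{p/q}) = p(q-p)$ by computing the self-linking of the $(p, q-p)$ torus knot with respect to the framing pulled back through $\Phi$. For two orbits on distinct tori $T_h$, $T_{h'}$ with $h \ne h'$, one lies inside the solid torus bounded by the other in $S^3 \subset \mathbb{C}^2$, and $Q_\tau(\gamma_{p/q}, \gamma_{p'/q'})$ equals the linking number of these two torus knots. The standard nested torus-knot linking formula yields $\min\{p(q'-p'),\ p'(q-p)\}$, and under the ordering $p/q \ge p'/q'$ the minimum is realized by $p'(q-p)$.

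To conclude the multi-orbit formula, expand $Q_\tau$ bilinearly on $\alpha = \prod_i \gamma_{p_i/q_i}$: the diagonal contributions sum to $\sum_i p_i(q_i - p_i)$, and under the ordering $p_i/q_i \ge p_j/q_j$ for $i \le j$ the off-diagonal contributions give $2\sum_{i < j} p_j(q_i - p_i)$. A short algebraic rearrangement using $P^2 = \sum_i p_i^2 + 2\sum_{i<j} p_ip_j$ and the identity $\sum_i p_iq_i + \sum_{i<j}(p_iq_j + p_jq_i) = PQ$ converts this total into $P(Q-P) - \sum_{i<j}(p_iq_j - p_jq_i)$, as claimed.

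The delicate step I expect is pinning down the framing. The trivialization $\tau$ is defined by pulling back the canonical framing of $T_\lambda \times S^1$ through $\Phi$, and the geodesic-flow factor $\sigma_{t/2}$ appearing in (\ref{eq9}) introduces an extra twist as $t$ winds around the base circle. Matching this framing against the blackboard framing of $T_h$ that underlies the standard torus-knot linking formula is what produces $p(q-p)$ rather than $pq$ as the self-intersection; a careful bookkeeping of this twist correction — and of the sign change across the vanishing cycle where $\Phi_+$ and $\Phi_-$ fail to agree — is where the real work lies.
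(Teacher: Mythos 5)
Your observation that the substitution $w_1 = x_1 + ix_2$, $w_2 = x_1 - ix_2$ (so $\pi = w_1 w_2$) collapses the parametrization of Lemma \ref{lem16} to $w_1 = e^{h+iy_0} e^{2\pi i p\tau/q}$, $w_2 = e^{-h-iy_0} e^{2\pi i (q-p)\tau/q}$ is correct and illuminating, and in fact the paper's surfaces (\ref{eq3})--(\ref{eq4}) are, in these coordinates, precisely the cone disks $z \mapsto (c z^p, c'z^{q-p})$ you describe (up to the $\epsilon_i$ perturbations). So the geometric picture is the same; but the route from there to $\min\{p(q'-p'),\,p'(q-p)\}$ is genuinely different. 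The paper perturbs the cone disks into general position and counts intersection points of the resulting polynomial system by Bernshtein's theorem: the system has $\max\{p(q'-p'),\,p'(q-p)\}$ roots in $(\mathbb{C}^*)^2$, of which $|pq'-p'q|$ are shown by an explicit norm estimate to lie outside the domain $\{|z|,|w|<\delta\}$, leaving the min. Your route instead invokes a ``standard nested torus-knot linking formula'' in $S^3$.

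The gap is in that invocation. The tori $T_h$ carrying the orbits do not sit in $S^3 = \{|w_1|^2 + |w_2|^2 = 1\}$; they sit in the level hypersurface $\{|w_1 w_2| = \delta\}$, and the portion of that hypersurface inside $E$ is $T^2 \times I$, not a sphere, so nesting and linking do not behave as in the $S^3$ torus-knot picture and there is no off-the-shelf linking formula to cite. Moreover, the value of $Q_\tau$ here is an intersection number of \emph{surfaces in the cobordism $E$ passing through the critical point}, and it is not a priori the $S^3$ linking number of the boundary knots: in the boundary case $p/q = 1$, $p'/q' = 0$ the corresponding circles form a Hopf link (linking $\pm 1$), yet the lemma gives $\min\{1\cdot 1,\,0\} = 0$, because the relevant cone disks are disjoint. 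The $\min$ is not a self-evident feature of nested torus knots; in the paper it falls out of the Bernshtein count together with a quantitative estimate locating where the excess $|pq'-p'q|$ roots live. Your plan identifies the framing correction as the delicate step, but the more immediate issue is identifying the correct ambient 3-manifold and justifying, rather than asserting, that the count is a minimum. If you want to salvage the linking-theoretic route, you would need to work directly with the cone disks in $E$ and control their intersections near and away from the vanishing cycle, which is in effect what the paper's estimates already do. The bilinear expansion and algebraic rearrangement in your final paragraph are correct and match the paper.
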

It is worth noting that  $Q_{\tau} $ is quadratic in the  sense that
\begin{equation*}
Q_{\tau}(\alpha)=\sum_i Q_{\tau}(\gamma_\frac{p_i}{q_i}) + 2\sum_{i<j} Q_{\tau}(\gamma_\frac{p_i}{q_i}, \gamma_\frac{p_j}{q_j}).
\end{equation*}
Turn out it suffices to compute   $ Q_{\tau}(\gamma_\frac{p_i}{q_i}) $ and $Q_{\tau}(\gamma_\frac{p_i}{q_i}, \gamma_\frac{p_j}{q_j}).$
The idea of  computing   $ Q_{\tau}(\gamma_\frac{p_i}{q_i}) $ and $Q_{\tau}(\gamma_\frac{p_i}{q_i}, \gamma_\frac{p_j}{q_j})$   is to express them  as intersection numbers of two surfaces.  To this ends, let us construct  a  surface $u: D_z \to E$ which  is asymptotic to $\gamma_{\frac{p}{q}}$ as follows.

 Let $\epsilon_i(r) : [0 , \infty) \to  [0, \infty)$ be   cutoff functions  with the following properties:
\begin{itemize}
\item
$\epsilon_i(r)$ is nonincreasing and it is  supported  in $r \le 2\delta$.
\item
 When $r \le \delta$,   $\epsilon_i(r)$ is constant. The constant is still denoted by $\epsilon_i$ and $ \epsilon_i \ll \delta$.
\end{itemize}
If $0\le \frac{p}{q}\le\frac{1}{2}$,   define $u(z)$ by 
\begin{equation} \label{eq3}
\left(  (1+ \epsilon_1)\frac{1}{2}e^{h+iy_0} z^p + (1+ \epsilon_2)\frac{1}{2}e^{-h-iy_0} z^{q-p} + \epsilon_3,   -(1+ \epsilon_1)\frac{i}{2}e^{h+iy_0} z^p + (1+ \epsilon_2)\frac{i}{2}e^{-h-iy_0} z^{q-p} \right).
\end{equation}
If $\frac{1}{2}\le\frac{p}{q} \le 1$,  define $u(z)$ by 
\begin{equation}\label{eq4}
 \left(  (1+ \epsilon_1)\frac{1}{2}e^{h-iy_0} z^{q-p} + (1+ \epsilon_2)\frac{1}{2}e^{-h+iy_0} z^{p} + \epsilon_3,   (1+ \epsilon_1)\frac{i}{2}e^{h-iy_0} z^{q-p} - (1+ \epsilon_2)\frac{i}{2}e^{-h+iy_0} z^p \right).
\end{equation}
Note that these two definitions coincide when $\frac{p}{q}=\frac{1}{2}$.  By construction,  $u$ is asymptotic to $\gamma_{\frac{p}{q}}(t)$ as $z $ tends to boundary of $D$.

\begin{lemma} \label{lem11}
The map $u$  satisfies the following properties:
\begin{enumerate}
\item
For sufficiently small $\epsilon_i$ and $\delta$,  then $u$ is embedded except at $z=0$.
\item
Let $\frac{p'}{q'} < \frac{p}{q}$ and  $v$ be the $\frac{p'}{q'} $--version of (\ref{eq3}) or (\ref{eq4}) accordingly. If $v$ doesn't involve the zero--order term, i.e., $\epsilon_3=0$, then for sufficiently small $\epsilon_i$ and $\delta$, the intersection points of $u$ and $v$ lie  in the region $\{0<|z| \le \delta\} \times \{0<|w| \le \delta\}  $.
\item
The intersection of $u$ and $v$ are  transversal and the sign of the intersection points are positive.
\end{enumerate}
\end{lemma}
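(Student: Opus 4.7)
The plan is to verify the three properties by direct computation using the explicit polynomial-plus-cutoff form of $u$ (and similarly $v$); I will sketch the case $0\le p/q\le 1/2$ from equation (\ref{eq3}), the other case being identical after swapping the roles of the two summands. The key observation that drives everything is that the linear combinations $u_1\pm iu_2$ simplify dramatically, eliminating one of the two $z$-powers:
\begin{equation*}
u_1+iu_2 = (1+\epsilon_1(|z|))\,e^{h+iy_0}\,z^{p} + \epsilon_3(|z|),\qquad u_1-iu_2 = (1+\epsilon_2(|z|))\,e^{-h-iy_0}\,z^{q-p} + \epsilon_3(|z|).
\end{equation*}

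For property (1), suppose $u(z_1)=u(z_2)$. The cutoffs depend only on the modulus, so subtracting shows that $(1+\epsilon_1(|z_i|))|z_i|^p$ must coincide for $i=1,2$; since the map $r\mapsto(1+\epsilon_1(r))r^p$ is strictly increasing once $\epsilon_i\ll 1$, this forces $|z_1|=|z_2|$. Then the $\epsilon$'s agree on both sides and the two displayed equations reduce to $z_1^p=z_2^p$ and $z_1^{q-p}=z_2^{q-p}$. Since $\gcd(p,q-p)=\gcd(p,q)=1$, the Bezout identity gives $z_1=z_2$, completing embeddedness away from $z=0$.

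For property (2), I would first handle the \emph{outer} region $|z|\ge 2\delta$ and $|w|\ge 2\delta$, where all cutoffs vanish and $u,v$ become genuine polynomial (hence holomorphic) maps. Setting $u(z)=v(w)$ and taking the product of the two simplified equations yields the clean relation $z^{q}=w^{q'}$, and taking the quotient yields $e^{2(h-k)}=|w|^{2(p'q-pq')/q}$. Because $p/q>p'/q'$ the exponent $p'q-pq'$ is negative, so this pins $|w|$ (and therefore $|z|$) to a single value. The delicate point is to choose the parameters so that this value actually falls inside the inner region: this is arranged by selecting $\lambda$ and the heights $h,k$ large compared to $\log(1/\delta)$ when constructing the surface, which is precisely the freedom granted by Lemma \ref{lem16} where $h=h(x_0)\to\infty$ as $x_0\to\pm\lambda$. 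A short contradiction argument then rules out intersections in the annular transition region $\delta<|z|<2\delta$ or $\delta<|w|<2\delta$ by continuity and smallness of $\epsilon_i$. This is where I expect the main obstacle to lie: a careful accounting of how the magnitudes of $\epsilon_1,\epsilon_2,\epsilon_3$ relative to $\delta^p,\delta^{q-p}$, etc., prevent intersections from escaping the intended region.

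For property (3), the point is that once we are inside the inner region $\{|z|\le\delta\}\times\{|w|\le\delta\}$, the functions $\epsilon_i(r)$ are constants, so $u$ and $v$ are polynomial maps $D_\delta\to\mathbb{C}^2$; in particular they are honestly $J_0$-holomorphic. Positivity of intersection is therefore automatic from the standard fact that any two distinct holomorphic curves in an almost complex $4$-manifold intersect with positive local multiplicity. Transversality is then a generic condition on the small parameters $\epsilon_3$ and $y_0,y_0'$: the intersection equations obtained by setting $u(z)=v(w)$ form a system whose Jacobian can be made non-degenerate by a codimension-one perturbation of these parameters, which the statement allows us to choose freely. Combining the three steps yields the lemma.
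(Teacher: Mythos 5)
Your computation of the pinned modulus in the outer region is correct, but the conclusion you try to draw from it is wrong, and the plan as written would fail. Writing $a=\log|z|$, $b=\log|w|$, the outer system $e^hz^p=e^kw^{p'}$, $e^{-h}z^{q-p}=e^{-k}w^{q'-p'}$ gives $b=\frac{(k-h)q}{pq'-p'q}$. Since $p/q>p'/q'$ forces $pq'-p'q>0$, the sign of $b$ equals the sign of $k-h$. Now $h$ and $k$ are \emph{not} free parameters: by Lemma~\ref{lem16} and (\ref{eq5}), $h=h(x_0)$ and $k=h(x_1)$ are monotone increasing functions of $|x_0|$, $|x_1|$, and $R''<0$ together with $R'(x_0)=p/q>p'/q'=R'(x_1)$ forces $0<x_0<x_1$, hence $k>h$. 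Therefore $b>0$, i.e.\ the pinned value satisfies $|z|,|w|>1$ and lies \emph{outside the unit disk on which $u$ and $v$ are defined}. That is what kills outer intersections. Your proposal to select $\lambda,h,k$ so that the pinned value falls \emph{inside} the inner region $\{|z|,|w|\le\delta\}$ would require $h>k$, which contradicts the forced inequality $k>h$; Lemma~\ref{lem16} gives no such freedom, because once $R$ and the slopes $p/q,p'/q'$ are fixed, $x_0,x_1$ and hence $h,k$ are determined. If you followed your recipe literally you would get stuck; the actual mechanism (pinned modulus greater than $1$, outside the domain) works automatically and is what the paper uses, together with an explicit estimate ruling out the transition annulus $\delta<|z|,|w|<2\delta$ where the cutoffs vary.

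Part (1) as you wrote it also has a gap: from $u(z_1)=u(z_2)$ and $u_1+iu_2=(1+\epsilon_1(|z|))e^{h+iy_0}z^p+\epsilon_3(|z|)$ one cannot simply ``subtract'' to conclude $(1+\epsilon_1(|z_1|))|z_1|^p=(1+\epsilon_1(|z_2|))|z_2|^p$, because $\epsilon_3(|z_1|)\neq\epsilon_3(|z_2|)$ in general; the argument is circular unless you already know $|z_1|=|z_2|$. The paper avoids this by a limit argument: the unperturbed map ($\epsilon_i\equiv0$) is a proper injective immersion away from $0$ because $\gcd(p,q)=1$, and a proper immersion remains injective under a sufficiently small perturbation. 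Your part (3) matches the paper and is fine: inside $\{|z|\le\delta\}\times\{|w|\le\delta\}$ the cutoffs are constants, $u,v$ are genuinely holomorphic, and positivity of intersection is standard.
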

\begin{proof}
\begin{enumerate}
\item
It is straightforward  to check that $u$ is immersed except at $z=0$. Moreover, $u$ is 1--1 onto its image for sufficiently small $\epsilon_i$.  To see this,  note that the unperturbed version of $u$ ($\epsilon_i=0$) is 1--1 onto its image  because  $p$ and $q$ are  relatively prime. By using the limit argument and the fact that $u$ is immersion, we can deduce the same conclusion for  sufficiently small $\epsilon_i$.


\item
There are three possibilities  of the order of $\frac{p}{q}, \frac{p'}{q'}$ :  $0 \le \frac{p'}{q'} < \frac{p}{q} \le \frac{1}{2}$,  $\frac{1}{2} \le \frac{p'}{q'}< \frac{p}{q} \le1$ and $ \frac{p'}{q'}< \frac{1}{2}< \frac{p}{q} \le1$.  We prove the statement case by case.  Without loss of generality, we assume that $y_0=0$ in (\ref{eq3}), (\ref{eq4}).

Let us consider the case that $0 \le \frac{p'}{q'} < \frac{p}{q} \le \frac{1}{2}$. Let $(z, w) $ be an intersection point  of $u$ and $v$.  It satisfies  the equations
\begin{equation*} \left\{
\begin{split}
&(1+ \epsilon_1)\frac{1}{2}e^h {z}^p + (1+ \epsilon_2)\frac{1}{2}e^{-h} z^{q-p} + \epsilon_3= (1+ \epsilon_4)\frac{1}{2}e^k  {w}^{p'} + (1+ \epsilon_5)\frac{1}{2}e^{-k} w^{q' -p'}\\
& -(1+ \epsilon_1)\frac{i}{2}e^h  {z}^p + (1+ \epsilon_2)\frac{i}{2}e^{-h} z^{q-p}= - (1+ \epsilon_4)\frac{i}{2}e^k {w}^{p'} + (1+ \epsilon_5)\frac{i}{2}e^{-k} w^{q'-p'}.
\end{split} \right.
\end{equation*}
These are equivalent to
\begin{equation} \label{eq11}
	\left\{
\begin{split}
&(1+ \epsilon_1)e^h {z}^p  + \epsilon_3= (1+ \epsilon_4)e^k {w}^{p'} \\
&  (1+ \epsilon_2)e^{-h} z^{q-p} +  \epsilon_3=   (1+ \epsilon_5)e^{-k} w^{q'-p'}.
\end{split} \right. 
\end{equation}
Assume that $0< \epsilon_i \ll \delta \ll1 $.  Then the  solutions to  (\ref{eq11})  lie inside either $\{|z| , |w| \le \delta\}$ or $\{|z|, |w| \ge c_0^{-1}\}$ for some constant $c_0 \ge 1$ and $c_0^{-1} \gg \delta$. To see this, if $|w| > \delta$, then (\ref{eq11}) implies
\begin{equation*}
\begin{split}
& |z|^p \ge \frac{1}{1+\epsilon_1 } \left((1+ \epsilon_4) e^{k-h}|w|^{p'} -\epsilon_3 e^{-h}\right) \ge c_0^{-1}|w|^{p'};\\
& |z|^{q-p}  \le \frac{1}{1+\epsilon_2} \left( (1+ \epsilon_5)e^{h-k}|w|^{q'-p'} +\epsilon_3 e^{h}\right) \le c_0|w|^{q'-p'}.
\end{split}
\end{equation*}
Therefore,  $|w|^{pq'-p'q} \ge c_0^{-q}$.  Similarly,  if $|z| > \delta$, then we can deduce that $|z| \ge c_0^{-1} $.  Note that the cases $\{|z| \le \delta, |w| \ge c_0^{-1}\}$ and $\{|w| \le \delta, |z| \ge c_0^{-1}\}$  cannot happen by  (\ref{eq11}).

In the case that $|w|, |z| \ge c_0^{-1} >2\delta$, then $\epsilon_i=0$ and  (\ref{eq11}) becomes
\begin{equation*}
	\left\{
\begin{split}
&e^h {z}^p  = e^k {w}^{p'} \\
&  e^{-h} z^{q-p} =   e^{-k} w^{q'-p'}.
\end{split} \right. 
\end{equation*}
By the above equations, we have
\begin{equation*}
\log|z|= \frac{k-h}{pq'-p'q}q',  \ \ \  \log|w|=\frac{k-h}{pq'-p'q}q.
\end{equation*}
Assume that $R'(x_0)=\frac{p}{q}$ and $R'(x_1)=\frac{p'}{q'}$. Then $0<x_0< x_1$  because  $R''<0$.  By equations (\ref{eq5}),  it is easy to check that $k>h$.
Then the  norms   $|z|, |w|>1$ which are not in our domain.
 Finally,  $(0,0)$, $(z, 0)$ and $(0, w)$ cannot be   intersection points for suitable  choices of $\epsilon_i$. This can be checked  directly.

For the case that $\frac{1}{2} \le \frac{p'}{q'}< \frac{p}{q} \le1$,  we get
\begin{equation} \label{eq12}
	\left\{ 
\begin{split}
&(1+ \epsilon_1)e^h {z}^{q-p}  + \epsilon_3= (1+ \epsilon_4)e^k {w}^{q'-p'} \\
&  (1+ \epsilon_2)e^{-h} z^{p} +  \epsilon_3=   (1+ \epsilon_5)e^{-k} w^{p'}. 
\end{split}\right. 
\end{equation}
 at the intersection point.
The same argument can show that the solutions to  (\ref{eq12})  lie inside either $\{|z| , |w| \le \delta\}$ or $\{|z|, |w| \ge c_0^{-1}\}$ for some constant $c_0 \ge 1$ and $c_0^{-1} \gg \delta$.  Note that  $\epsilon_i=0$ where  $|w|, |z| \ge c_0^{-1} > 2\delta$. If $|w|, |z| \ge c_0^{-1}$, then  (\ref{eq12}) implies that
\begin{equation*}  
\log|z|= \frac{h-k}{pq'-p'q}(q'),  \ \ \  \log|w|=\frac{h-k}{pq'-p'q}(q).  
\end{equation*}
Assume that $R'(x_0)=\frac{p}{q}$ and $R'(x_1)=\frac{p'}{q'}$.  Then $x_0< x_1<0$  because of $R''<0$.  By the definition of $h$ and $k$,  it is easy to check that $h>k$.
Then the  norms  $|z|, |w|>1$  which are not in our domain.  

Again,  $(0,0)$, $(z, 0)$ and $(0, w)$ cannot be   intersection points for suitable  choices of $\epsilon_i$.

For the case that   $ \frac{p'}{q'}< \frac{1}{2}< \frac{p}{q} \le1$, we have
\begin{equation} \label{eq13}
	\left\{ 
\begin{split}
&(1+ \epsilon_1 )e^h z^{q-p} + \epsilon_3 = (1+ \epsilon_5) e^{-k}w^{q'-p'}\\
&  (1+ \epsilon_2 )e^{-h} z^{p} + \epsilon_3 = (1+ \epsilon_4) e^{k}w^{p'}\\
\end{split} \right. 
\end{equation}
The same argument can show that the solutions to  (\ref{eq13})  lie inside either $\{|z| , |w| \le \delta\}$ or $\{|z|, |w| \ge c_0^{-1}\}$ for some constant $c_0 \ge 1$ and $c_0^{-1} \gg \delta$. 
As before, if $|w|, |z| \ge c_0^{-1} > 2\delta$, then they satisfy
\begin{equation*}
\log|z|= \frac{h+k}{pq'-p'q}(q'),  \ \ \  \log|w|=\frac{h+k}{pq'-p'q}(q).
\end{equation*}
Again they are not in our domain.

The intersection points of the forms  $(0,0)$, $(z, 0)$ and $(0, w)$ can  be ruled out as the other cases.     
\item
In the region $\{0<|z| \le \delta\} \times \{0<|w| \le \delta\}  $,  $\epsilon_i$ are constants. The statement follows from that the coordinate functions $x_1$ and $x_2$ are holomorphic with respect to $z$ and $w$.
\end{enumerate}
\end{proof}

Let us consider the case that $\frac{p}{q} =\frac{p'}{q'}$.  Without loss of generality, we only consider the case that $x_0>0 $ and $R'(x_0)=\frac{p}{q}< \frac{1}{2}$. The argument for the other cases are the same, we left the details to the reader. Let
\begin{equation*}
u(z)=\left(  (1+ \epsilon_1)\frac{1}{2}e^h z^p + \frac{1}{2}e^{-h} z^{q-p} + \epsilon_2,   -(1+ \epsilon_1)\frac{i}{2}e^h z^p + \frac{i}{2}e^{-h} z^{q-p} \right)
\end{equation*}
and
\begin{equation*}
v(z)=\left(  \frac{1}{2}e^{h+iy_0} w^p + \frac{1}{2}e^{-h-iy_0} w^{q-p} ,   -\frac{i}{2}e^{h+iy_0} z^p + \frac{i}{2}e^{-h-iy_0} w^{q-p} \right).
\end{equation*}

\begin{lemma}
The maps $u$ and $v$ satisfy the following properties:
\begin{enumerate}
\item
For sufficiently small $\epsilon_i$ and $\delta$,  then  $u$ and $v$ are embedded except at $z=0$.
\item
Assume that $\epsilon_1 \gg \epsilon_2$ and $y_0$ is generic. Then all the intersection points of $u$ and $v$ lie in the region $\{0<|z| \le \delta\} \times \{0<|w| \le  2\delta\}  $.
\item
The intersection of $u$ and $v$ are  transversal and the sign of the intersection points are positive.
\end{enumerate}
\end{lemma}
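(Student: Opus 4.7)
The plan is to mimic the structure of Lemma \ref{lem11} with modifications adapted to the equal-slope case $\frac{p}{q}=\frac{p'}{q'}$. The essential new difficulty is that the leading-order terms in $u$ and $v$ have identical powers of $z$ and $w$, so one cannot separate solutions by exponent mismatch; instead one uses the distinct phases coming from $y_0$ together with the asymmetry $\epsilon_1\gg_\delta\epsilon_2$.

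For part (1), I would argue exactly as in Lemma \ref{lem11}(1): the unperturbed versions of $u$ and $v$ are $1$-$1$ onto their images because $p$ and $q$ are coprime, they are immersions away from $z=0$, and a limit argument extends these properties to all sufficiently small $\epsilon_i$.

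For part (2), I would write out the equations $x_1^u=x_1^v$, $x_2^u=x_2^v$ and take the sum and difference to decouple them, obtaining
\begin{equation*}
(1+\epsilon_1)z^p+\epsilon_2 e^{-h}=e^{iy_0}w^p,\qquad z^{q-p}+\epsilon_2 e^{h}=e^{-iy_0}w^{q-p}.
\end{equation*}
By the same annular-bounds argument used in Lemma \ref{lem11}(2), solutions must lie either in $\{|z|,|w|\le \delta\}$ or in $\{|z|,|w|\ge c_0^{-1}\}$ for some $c_0\gg 1$. To rule out the large region, notice that there $\epsilon_i=0$, so the system reduces to $(1+\epsilon_1)z^p=e^{iy_0}w^p$ and $z^{q-p}=e^{-iy_0}w^{q-p}$. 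Taking absolute values gives $(1+\epsilon_1)|z|^p=|w|^p$ and $|z|^{q-p}=|w|^{q-p}$; since $p$ and $q-p$ are both positive and $\gcd(p,q)=1$, these force $1+\epsilon_1=1$, which contradicts $\epsilon_1\gg_\delta\epsilon_2>0$. (Alternatively, for a generic choice of $y_0$, one eliminates $w$ using the second equation and gets a phase condition $(1+\epsilon_1)=e^{iy_0q/(q-p)}\zeta^p$ with $\zeta^{q-p}=1$, which has no solution since the left side is real and the right side is non-real for generic $y_0$.) This leaves the small region as the only possibility, and the trivial solutions $(0,0)$, $(z,0)$, $(0,w)$ are excluded by the nonvanishing of $\epsilon_2$ together with suitable choice of phase.

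For part (3), on the region $\{0<|z|\le\delta\}\times\{0<|w|\le 2\delta\}$ the cut-off functions $\epsilon_i$ are constant, so both $u$ and $v$ depend holomorphically on $z$ and $w$ respectively; the coordinates $x_1,x_2$ of $\mathbb{C}^2$ are holomorphic in those parameters, and hence the intersection of the two pseudoholomorphic images is automatically positive and, for generic $y_0$, transverse. The main obstacle is keeping careful track of the relative sizes $\epsilon_2\ll\epsilon_1\ll\delta\ll 1$ so that the dichotomy in part (2) is strict, and in ensuring that the genericity of $y_0$ is strong enough to rule out the large-region solutions; once these parameters are fixed, parts (1) and (3) follow routinely from the corresponding arguments in Lemma \ref{lem11}.
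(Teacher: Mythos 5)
Your overall plan (set up the system, split into a small region, an annulus, and a large region, and use $y_0$ and the $\epsilon_i$ to rule out solutions outside the small region) is the same as the paper's, and part (1) and part (3) are handled in essentially the same way. But there are two genuine gaps in your treatment of part (2).

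\textbf{The large region argument is invalid.} When $|z|\ge 2\delta$ the $\epsilon_i$ are cut-off functions that vanish identically there, so the system truly reduces to $z^p=e^{iy_0}w^p$ and $z^{q-p}=e^{-iy_0}w^{q-p}$ --- there is no $(1+\epsilon_1)$ left. Your claimed contradiction ``$1+\epsilon_1=1$'' therefore does not exist. The correct reason there is no solution with $|z|\ge 2\delta$ (and $z,w\ne 0$) is genericity of $y_0$: raising the first equation to the $(q-p)$ power and the second to the $p$ power and dividing gives $e^{iy_0 q}=1$, which fails for generic $y_0$. You do gesture at this as an ``alternative,'' but you present it as secondary to an argument that doesn't work, and even in the alternative you still incorrectly carry a $(1+\epsilon_1)$ factor.

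\textbf{You do not close the intermediate annulus $\delta\le |z|<2\delta$, which is exactly where the hypothesis $\epsilon_1\gg_\delta\epsilon_2$ is needed.} You invoke ``the same annular-bounds argument as in Lemma \ref{lem11}(2),'' but that argument relied on the exponent mismatch $pq'-p'q\ne 0$, which is precisely what is absent in the equal-slope case and is the reason this lemma has to be proved separately. In this regime the paper's proof derives, by taking moduli of the two equations and combining,
\begin{equation*}
\delta^{p}\,\epsilon_1\;\le\; c_0\,\delta^{2p-q}\,\epsilon_2,
\end{equation*}
which is then contradicted by choosing $\epsilon_1$ of order $\delta^{p-q}\epsilon_2$ (hence $\epsilon_1\gg_\delta\epsilon_2$). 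You flag ``keeping careful track of the relative sizes $\epsilon_2\ll\epsilon_1\ll\delta\ll 1$'' as the main obstacle, but that is the content of the proof, not a routine bookkeeping step that can be deferred. Without it, you have not shown the dichotomy you assert. (Minor: the statement's conclusion bounds $|w|$ by $2\delta$, not by $\delta$, which you would recover from the system once $|z|\le\delta$ is established.)

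Parts (1) and (3) are fine and match the paper's: embeddedness by the coprimality of $p$ and $q$ plus a limiting argument, positivity/transversality because $\epsilon_i$ are constant in the small region so $x_1,x_2$ are holomorphic in $z$ and $w$ there.
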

\begin{proof}
We only prove the second statement and the proof of the other two is the same as in Lemma \ref{lem11}.   Let $(z, w)$ be an  intersection point. Then  it satisfies equations
\begin{equation*}
	\left\{ 
\begin{split}
&(1+ \epsilon_1)\frac{1}{2}e^h {z}^p + \frac{1}{2}e^{-h} z^{q-p} + \epsilon_2= \frac{1}{2}e^{h+iy_0}  {w}^{p} + \frac{1}{2}e^{-h-iy_0} w^{q -p}\\
& -(1+ \epsilon_1)\frac{i}{2}e^h  {z}^p + \frac{i}{2}e^{-h} z^{q-p}= - \frac{i}{2}e^{h+iy_0} {w}^{p} +  \frac{i}{2}e^{-h-iy_0} w^{q-p}.
\end{split} \right. 
\end{equation*}
The above equations are equivalent to
\begin{equation}\label{eq10}
	\left\{  
\begin{split}
&(1+ \epsilon_1)e^h {z}^p  + \epsilon_2= e^{h+iy_0}{w}^{p} \\
&  e^{-h} z^{q-p} +  \epsilon_2=   e^{-h-iy_0} w^{q-p}.
\end{split}\right. 
\end{equation}
If $|z| \ge 2\delta$, then we have $\epsilon_i=0$, $z^p=e^{iy_0}w^{p}$ and $z^q=w^q$.  Using  the polar coordinates, it is easy to check that $\frac{p}{q}= \frac{y_0+ 2\pi k }{2\pi l} $ for some $k, l \in \mathbb{Z}$.  This is impossible for a generic $y_0$.  Thus there are no intersection points in the region $|z| \ge 2\delta$.

If $\delta \le |z| < 2\delta$, then   (\ref{eq10})  implies that
\begin{equation*}
\begin{split}
&(1+ \epsilon_1) |z|^p - \frac{1}{2}e^{-h} \epsilon_2 \le |w|^p; \\
&|w|^{q-p} \le |z|^{q-p} + e^{h} \epsilon_2.
\end{split}
\end{equation*}
Hence,
\begin{equation*}
\begin{split}
(1+ \epsilon_1) |z|^p \le |z|^p \left(  1 + e^h  \frac{\epsilon_2}{|z|^{q-p}} \right)^{\frac{p}{q-p}} + e^{-h} \epsilon_2  \le |z|^p +  c_0\frac{e^h \epsilon_2}{ |z|^{q-2p} } +e^{-h} \epsilon_2.
\end{split}
\end{equation*}
Therefore,
$$\delta^p \epsilon_1 \le  c_0 \delta^{2p-q} \epsilon_2. $$
This is impossible provided that we choose $\epsilon_1 =100 c_0 \delta^{p-q} \epsilon_2$. In conclusion, the only possibility  is that $0 \le |z| < \delta$. From  (\ref{eq10}), it is easy to deduce that $0 \le |w| \le 2 \delta$.

Again,  it is straightforward to check that  $ (0,0)$, $(z, 0)$ and $(0, w)$ cannot be   intersection points.
\end{proof}

\begin{proof}[Proof of Lemma \ref{lem9}]
For $\frac{p}{q}  \ne \frac{p'}{q'}$, then $Q_{\tau}(\gamma_{\frac{p}{q}},  \gamma_{\frac{p'}{q'}} )$   equals to the intersection number of $u$ and $v$.  The  intersection number  equals to the number of solutions to equations   (\ref{eq11}) or (\ref{eq12}) or (\ref{eq13}) which lie inside the region $\{0<|z| \le \delta\} \times \{0<|w| \le \delta\}  $.   Keep in mind that our $\epsilon_i$ are constants in this region rather than functions. Replace the functions $\epsilon_i$ in (\ref{eq11}), (\ref{eq12}) and  (\ref{eq13})  by constants.  
According to  the theorem of Bernshtein \cite{DNB} (also see Chapter 3 of \cite{BS}),  equations   (\ref{eq11}) or (\ref{eq12}) or (\ref{eq13})  have total     $ \max\{ p(q'-p'), p'(q-p)\}$     solutions  in $\mathbb{C}^* \times \mathbb{C}^*$.   However, there are $|pq'-qp'|$ solutions outside the region   $\{0<|z|, |w|< \delta\}$. In fact,  the estimates in the proof of  Lemma \ref{lem11} implies that these solutions lie in the region $\{1\le |z|, |w|\}$.    To see this, note that the unperturbed equations  (\ref{eq11}) or (\ref{eq12}) or (\ref{eq13}) ($ \epsilon_i=0$) have exactly $|pq'-qp'|$ solutions in the region $\{1< |z|, |w|\}$.  By the limit argument, the same conclusion holds provided  that $\epsilon_i$   are  small enough. In conclusion, we have
 $$ Q_{\tau}(\gamma_{\frac{p}{q}}, \gamma_{\frac{p'}{q'}}) =  \max\{ p(q'-p'), p'(q-p)\} -|pq'-p'q|= \min\{p(q'-p'), p'(q-p)\}.$$

Note that $Q_{\tau}(e_{\frac{p}{q}}) =Q_{\tau}(h_{\frac{p}{q}})  =Q_{\tau}(e_{\frac{p}{q}}, h_{\frac{p}{q}})   $ with respect to our trivializations.  So the relative self intersection number   equals to the number of solutions of  (\ref{eq10}) in the region   $\{0<|z|, |w|< \delta\}$. By Bernshtein's theorem \cite{DNB} and the same argument as before,  we have $$Q_{\tau}(e_{\frac{p}{q}}, h_{\frac{p}{q}})  = p(q-p). $$

\end{proof}

\paragraph{Relative Chern number }
\begin{lemma} \label{lem10}
Let $\alpha =\Pi_i \gamma_{\frac{p_i}{q_i}}$ be an orbit set  with $P=\sum_i p_i$ and $Q=\sum_i q_i$. Then  we have $c_{\tau} (\alpha)=Q$.
\end{lemma}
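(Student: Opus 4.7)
My plan is to reduce to a single orbit via the additivity $c_\tau(\alpha) = \sum_i c_\tau(\gamma_{p_i/q_i})$, so that it suffices to show $c_\tau(\gamma_{p/q}) = q$ for a single periodic orbit. The total will then be $\sum_i q_i = Q$. The degree-one check in Lemma \ref{lem4} (where $c_\tau(e_a)=1$) and the Chern number $c_1(T^{vert}E)\cdot[F]=2-2g(F)$ implicit in (\ref{eq14}) both support the pattern $c_\tau=q$, so this is the natural target.

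To compute $c_\tau$ for a single orbit I would cap $\gamma_{p/q}$ by the explicit holomorphic disk $u:D\to E$ from (\ref{eq3})--(\ref{eq4}). A direct calculation yields $u_1^2+u_2^2=z^q$, confirming $\pi\circ u(z)=z^q$ and thus that $u$ has vertical degree $q$. Since $E\subset\mathbb{C}^2$ and $\pi(x_1,x_2)=x_1^2+x_2^2$ is holomorphic, the vertical tangent bundle $T^{vert}E=\ker d\pi$ is a holomorphic line bundle admitting the global holomorphic section $s(\mathbf{x})=(-x_2,x_1)$, which manifestly satisfies $d\pi\cdot s=0$ and vanishes only at the origin. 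Its pullback $u^*s=(-u_2(z),u_1(z))$ is holomorphic on $D$ and (setting the irrelevant perturbation $\epsilon_3=0$) vanishes only at $z=0$, to order equal to the smaller exponent appearing in $u_1,u_2$, namely $\min(p,q-p)$. By the standard formula for the relative Chern number,
\[
c_\tau(u)=\min(p,q-p)+\operatorname{wind}_\tau\!\left(u^*s|_{\partial D}\right),
\]
where the second term is the winding of $u^*s$ around $\partial D$ against the trivialization $\tau$ inherited via $\Phi$ from the canonical frame $(\partial_x,\partial_y)$ on $T_\lambda$.

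The technical heart of the argument is the boundary winding. Using the parameterization $z=e^{2\pi i\tau/q}$ of $\partial D$ and the orbit formula of Lemma \ref{lem16}, I would push the vertical vector $(-u_2,u_1)$ through $d\Phi$ into the $(\partial_x,\partial_y)$ frame along the orbit, viewing the resulting $\mathbb{C}^*$-valued loop and computing its degree. I expect the winding to equal $\max(p,q-p)$, giving the clean total
\[
c_\tau(u)=\min(p,q-p)+\max(p,q-p)=q,
\]
and hence $c_\tau(\alpha)=\sum_i q_i=Q$ as claimed.

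The main obstacle is this last step: the diffeomorphism $\Phi$ is built from non-holomorphic operations (moduli, arguments, signs), so tracking the holomorphic vector $(-x_2,x_1)$ through $d\Phi$ along $\gamma_{p/q}$ requires careful accounting of real and imaginary parts. An alternative route, which may circumvent this bookkeeping, is to apply the short exact sequence $0\to u^*T^{vert}E\to u^*TE\to(\pi\circ u)^*TD\to 0$: the middle term is trivial as an ambient bundle on $\mathbb{C}^2$, and the quotient $(\pi\circ u)^*TD$ is controlled by Riemann--Hurwitz for $z\mapsto z^q$, which produces a ramification contribution of $q-1$ and, together with the single boundary loop of $c=e^{2\pi i\tau}$ winding $q$ times, should again reproduce $c_\tau=q$.
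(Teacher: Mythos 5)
Your reduction to a single orbit via additivity, and your idea of capping with the explicit holomorphic disk $u$ and noting $\pi\circ u(z)=z^q$, both match the paper. But the central claim — that $c_\tau(\gamma_{p/q})$ is computed as the relative Chern number of the pulled-back \emph{vertical} bundle $u^*\ker d\pi$ via the section $u^*s=(-u_2,u_1)$ — misidentifies the bundle whose Chern number $c_\tau$ measures. In the ECH index for a cobordism, $c_\tau(Z)$ is the relative first Chern number of the \emph{full} rank-two complex tangent bundle $T\overline{X}|_Z$ (equivalently of $\det T^{1,0}_J\overline{X}$), trivialized on the cylindrical end by $\tau$ on $\ker\pi_*$ together with the canonical frame $(\partial_s,R)$ on the complement. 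This is exactly what the paper's section $\psi=(ds+idt)\wedge(dx+iJdx\pm\cdots)$ of $T^{*(2,0)}_J E$ computes. Your quantity is only the vertical piece of this. The horizontal piece is \emph{not} zero: over $u$ the horizontal bundle is $(\pi\circ u)^*TD$ with $\pi\circ u(z)=z^q$, and against the boundary trivialization $\pi_*(\partial_s-iR)\sim w\partial_w$ at $|w|=1$ it contributes relative Chern number $q$. (Sanity check: for the degree-one constant section $C_a=\{a\}\times D$, the vertical bundle $T_aS\times D$ is trivial with zero contribution, so the horizontal must account for all of $c_\tau(C_a)=1$, which is Lemma~\ref{lem4}.) Since the paper's total is $c_\tau(\gamma_{p/q})=q$, the vertical contribution must be $0$, not $q$; your expected boundary winding $\max(p,q-p)$ is therefore off, and the arithmetic $\min+\max=q$ happens to land on the right number for the wrong bundle.

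There is a secondary issue: the assertion that $T^{vert}E=\ker d\pi$ is ``a holomorphic line bundle admitting the global holomorphic section $s$'' fails at the Lefschetz critical point, where $\ker d\pi$ jumps from rank $1$ to rank $2$ and has no direction-independent limit. The pullback $u^*\ker d\pi$ does extend to a line bundle over $D_z$ (the curve approaches the origin in a fixed complex direction), but this requires an argument rather than being automatic; the paper circumvents the whole issue by working with the everywhere-defined determinant line and the explicit section $\psi$. Your alternative route via $0\to u^*T^{vert}E\to u^*TE\to(\pi\circ u)^*TD\to 0$ is structurally closer, since it involves the full $TE$, but that sequence also degenerates at $z=0$ where $d\pi=0$, so the Riemann--Hurwitz bookkeeping needs to account for the drop in rank there rather than treating it as an exact sequence of bundles.
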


\begin{proof}
Note that  the relative Chern number  satisfies the additive  property
\begin{equation*}
c_{\tau}(\alpha)=\sum_i c_{\tau}(\gamma_{\frac{p_i}{q_i}}).
\end{equation*}
Thus it suffices to compute $c_{\tau}(\gamma_{\frac{p}{q}})$.

By definition, we have
\begin{equation*}
4x^2=4|\hat{p}|^2|\hat{q}|^2=(|x_1|^2 + |x_2|^2)^2 - |\pi(x_1, x_2)|^2=2|x_1|^2 |x_2|^2 - \bar{x}_1^2 x_2^2 - x_1^2 \bar{x}_2^2.
\end{equation*}

By differentiating   both side of the above identity, we obtain
\begin{equation*}
4xdx=(x_1\bar{x}_2-\bar{x}_1 x_2) (x_2 d \bar{x}_1-\bar{x}_2 dx_1 + \bar{x}_1 dx_2 - x_1d\bar{x}_2).
\end{equation*}
Note that $4x^2 =-(x_1\bar{x}_2-\bar{x}_1 x_2 )^2=4 (Im(x_1 \bar{x}_2))^2$. Along the periodic orbits at $x \ne 0$, we can check that $x=Im(x_1 \bar{x}_2)$. Therefore,
\begin{equation*}
dx= \frac{i}{2} (x_2 d \bar{x}_1-\bar{x}_2 dx_1 + \bar{x}_1 dx_2 - x_1d\bar{x}_2)
\end{equation*}
along the periodic orbit. In addition, it can be defined along the periodic orbit at $x=0$.

Let $J \in \mathcal{J}_h(E, \omega_E)$ be an almost complex structure such that $J(\partial_x) =\partial_y$ over  $1-\delta \le r \le 1$ and $J=J_0$ near the critical point.  Therefore, $dx + idy= dx + iJdx {\pm} iR_r' ( \pm x) dt$ over the region  where $x \ne 0$ and $1- \delta \le r\le 1$.  $T^{1, 0}_J E $ is generated by
\begin{equation*}
\begin{split}
ds+ idt, \ \ \ \ dx+iJdx {\pm}\frac{R_r'({\pm}x)}{2}(ds +idt).
\end{split}
\end{equation*}
Define a section $\psi= (ds + idt )\wedge ( dx+iJdx  \pm \frac{R_r'(\pm x)}{2}(ds +idt)) = \frac{\bar{z}dz}{|z|^2}\wedge ( dx+iJdx )$, where $z = \pi( \textbf{x})$.
Near  the critical point of $\pi_E$,
$ dx+iJdx  =  dx+iJ_0dx= -i \bar{x}_2 dx_1 +i \bar{x}_1 dx_2$.  Thus
\begin{equation*}
\begin{split}
\psi=\frac{2 \bar{z}}{|z|^2}(x_1dx_1 + x_2 dx_2)\wedge (-i \bar{x}_2 dx_1 +i \bar{x}_1 dx_2) = 2i\bar{z} dx_1 \wedge dx_2.
\end{split}
\end{equation*}
Therefore, we can extend $\psi$ to a section of $T^{2, 0}_J E$ over the whole $E$.

Obviously, $\psi$ is  nowhere vanishing except at $z=0$.  Let $u: D \to E$ be the unperturbed version of  (\ref{eq3}) or (\ref{eq4}). Then  $\psi \vert_u$ vanishes  at the critical point and the vanishing order is $-q$. Therefore, $c_{\tau}(\gamma_{\frac{p}{q}})=- \#\psi^{-1}(0)=q$.

\end{proof}

\section{Proof of the main results}

\subsection{Energy constraint}
Before we prove Theorem \ref{thm3}, let us  write down a formula for the $\omega_X$--energy and deduce  a constraint on the relative homology class.

Let  $Z_{\alpha} \in H_2(E, \alpha)$. Its  $\omega_E$--energy is denoted by  $E(\alpha)$.  By  Stokes’ theorem,  $E(\alpha)=\int_{\alpha} \theta_E$.  
By  a direct  computation, we have
\begin{equation*}
E(\gamma_{\frac{p}{q}})  = \left\{
\begin{aligned}
& x_0p -R(x_0 )q \   \mbox{ at  $ x \ge  0$ such that $R'(x_0)=\frac{p}{q}$} \\
&  x_0(p-q) - R(-x_0)q.  \   \mbox{ at $ x \le 0$ such that $R'(x_0)=\frac{p}{q}$} \\
\end{aligned}
\right.
\end{equation*}
Therefore,
 \begin{equation*}
 E(\alpha) = \sum_i   \left( |x_i| R'(|x_i|) -R(|x_i|) \right) q_i.
\end{equation*}
By the definition of $R(|x|)$, it is easy to show that $E(\alpha) \le {Q}$.  Note that  we have $E(S_a)=0$ for  $S_a=\{a\} \times D$.  Thus for $Z_{\alpha} \in H_2(X, \alpha)$, we still have $E(Z_{\alpha}) \le {Q}$. After  being perturbed  by $f_S$ and $\{f_{T_{x_0}}\}$, the energy $E(Z_{\alpha}) \le (1 + \varepsilon) Q$ for a small $\varepsilon>0$.
\begin{remark} \label{rmk2}
Take $f_{T_{x_0}}$ such that the minimum is $0$. Then after the Morse--Bott perturbations, we  have  $E(e_{\frac{p}{q}}) =E(\gamma_{\frac{p}{q}})$.
\end{remark}

For a general class $Z_{\alpha}+ m[F] \in H_2(X, \alpha)$, the $\omega_X$--energy is
\begin{equation} \label{eq21}
 E(Z_{\alpha} + m[F])  = E(Z_{\alpha}) + m \int_{F} \omega_X.
\end{equation}
\begin{remark} \label{rmk1}
By combining (\ref{eq21}) and  (\ref{eq14}), we know that  the ECH index and the  energy satisfy a  relation $$I(Z_1)-I(Z_2)=  c(E(Z_1) -E(Z_2))$$ for some constant $c$.  Note that $c\ne 0$ if $Q \ne g(F)-1$.  This property ensures that the right hand side of (\ref{eq2})  is a finite sum.  Thus the cobordism maps can be defined with $\mathbb{Z}_2$-- or $\mathbb{Z}$--coefficients.
\end{remark}

\begin{definition} [See \cite{PS}, \cite{PS1}.]
Fix $J \in\mathcal{J}_h(W, \omega_W) $.  A tuple $(\pi_W: W \to B, \omega_W, J) $ is called   non--negative  if   at any point $w $ away from the critical points of $\pi_W$, then     $\omega_W \vert_{T W_w^{hor}} = \rho(w) (\pi_W^* \omega_B)  \vert_{TW_w^{hor}} $ , where   $\omega_B \in \Omega^2(B)$  is a positive volume form  and $\rho$ is a non--negative function.
\end{definition}

\begin{remark}\label{rmk3}
An important consequence of this definition is that  if   $(\pi_W: W \to B, \omega_W, J) $ is  non--negative, then any $J$--holomorphic current $\mathcal{C}$ has non--negative $\omega_W$--energy. The reason is that  $$E(C) =\int_C |TC^{vert}|^2+ \int_{\pi_W(C)} \rho \vert_C \omega_B$$ for every  simple holomorphic curve  $C$ away from the critical points of $\pi_W$, where $TC^{vert}$ is the vertical component of $TC$. Moreover, the holomorphic curve with  ``minimal'' energy is very rigid in the following sense:
   If $E(C) = \int_{\pi_W(C)} \rho \vert_C \omega_B$, then $C$ is  horizontal, i.e., $TC \subset T\overline{W}^{hor}$.

   The elementary Lefschetz fibration $(X, \pi_X, \omega_X)$ together with $J \in J_h(X, \omega_X)$ is non--negative.  (See Lemmas 18.3, Proposition 19.10 of \cite{PS1}.) Also, the tuple $(X, \pi_X, \omega_X, J)$ is still non--negative after being  perturbed by $f_S, \{f_{T_{x_0}}\}$ as long as  these functions are non--negative.
\end{remark}

\begin{lemma} \label{lem3}
Given a positive integer  $Q$,  if     $\int_F \omega_X \ge Q+1$, then we have  the following property:  Fix $J \in \mathcal{J}_h(X, \omega_X)$. Then for any orbit set $\alpha$ with degree less than or equal to  $Q$  and $m<0$, there is no holomorphic current $\mathcal{C}$ with  relative homology class $Z_{\alpha}+ m[F]$.
\end{lemma}
\begin{proof}
According to the above discussion,  we know that  $E(\alpha + m[F])  \le (1+ \varepsilon) {Q} + m \int_{F} \omega_X. $  
If $m<0$, then $E(\alpha + m[F])  <0$.  However, our $(X, \pi_X, \omega_X, J )$ is  non--negative,  thus we have $E(\mathcal{C}) \ge 0 $ for any $J$--holomorphic current. Therefore, the class $Z_{\alpha} + m[F]$ has no holomorphic representative.
\end{proof}

\begin{lemma}  \label{lem8}
Given a  positive integer $Q$,  if $\int_F \omega_X \ge Q+1$,  then  the conclusion in Lemma \ref{lem3} still holds  for any $\Omega_X$--tame almost complex structure $J$ which is sufficiently close to $ \mathcal{J}_h(X, \omega_X)$.
\end{lemma}
\begin{proof}
Given $\epsilon_0>0$, then for any $\Omega_X$--tame almost complex structure $J$ which is sufficiently close to $ \mathcal{J}_h(X, \omega_X)$,  we have   $E(\mathcal{C}) \ge -\epsilon_0$ for any $J$--holomorphic current.  Otherwise, we can find a sequence of $\Omega_X$--tame almost complex structures $\{J_n\}_{n=1}^{\infty}$ and $J_n$--holomorphic currents $\mathcal{C}_n$ such that $\{J_n\}_{n=1}^{\infty}  $ converges to $ J_{\infty} \in \mathcal{J}_h(X, \omega_X)$ and $E(\mathcal{C}_n)< -\epsilon_0$.
According to Taubes's Gromov compactness (see Lemma 6.8 of  \cite{HT}),  $\{\mathcal{C}_n\}_{n=1}^{\infty}$ converges to a $J_{\infty}$--holomorphic current (possibly broken) $\mathcal{C}_{\infty}$ in current sense. Therefore, $E(\mathcal{C}_{\infty}) \le -\epsilon_0<0$ which contradicts  $J_{\infty} \in \mathcal{J}_h(X, \omega_X)$.

The rest of the argument is the same as in Lemma \ref{lem3}.
\end{proof}

\subsection{Cobordism maps}
In this subsection, we show that   definition   (\ref{eq2}) makes sense. The proof   is essentially  the same as \cite{CG}. The argument there can use to show that if $(Y, \pi, \omega)$ only consists of $Q$--negative elliptic orbits and positive hyperbolic orbits, then the ECH index is non--negative for every holomorphic curve.  However, the conclusion is not true in general  if $(Y, \pi, \omega)$  also contains   $Q$--positive elliptic orbits.    But in our case,    the conclusion still holds  due to the explicit computation of the ECH index and Fredholm index.


\begin{lemma}\label{lem6}
Let  $C \in \mathcal{M}^J(\alpha, Z_{\alpha} + m[F])$ be an irreducible holomorphic curve.  Let  $q=[\alpha] \cdot [F] \le Q$. Then  the Fredholm index of the holomorphic curve $C$ is
\begin{equation*}
 {\rm{ind}} C = 2g(C)-2 + h(C)  + 2q + 4m(1-g(F))+ 2  e_+(C),
\end{equation*}
where $e_+(C)$ is the number of ends at $Q$--positive elliptic orbits and $h(C)$ is the number of ends at hyperbolic orbits.
\end{lemma}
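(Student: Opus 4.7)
The plan is to derive the formula from the standard Fredholm index formula for punctured holomorphic curves in a four-dimensional symplectic cobordism and then plug in the quantities computed earlier in the paper. Recall that for an irreducible punctured curve $C$ with only positive ends (since $X$ has one boundary component) at orbits $\{\gamma_k\}$ of the orbit set $\alpha$, the general formula reads
\begin{equation*}
\mathrm{ind}\, C = -\chi(C) + 2 c_\tau(C) + \sum_k CZ_\tau(\gamma_k).
\end{equation*}
I would first rewrite $-\chi(C) = 2g(C)-2 + \#\{\text{ends of } C\}$, noting that every end is asymptotic to an orbit that, after the perturbation of Section 4.2, is one of the three types listed there: $Q$-positive elliptic, $Q$-negative elliptic, or positive hyperbolic. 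Thus $\#\{\text{ends}\} = e_+(C) + e_-(C) + h(C)$.

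Next I would evaluate the Conley--Zehnder sum. From equation~(\ref{eq6}) and Lemma~\ref{lem4}, together with the definition of $Q$-positive/$Q$-negative elliptic orbits, the trivialization $\tau$ yields $CZ_\tau = +1$ for a $Q$-positive elliptic end, $CZ_\tau = -1$ for a $Q$-negative elliptic end, and $CZ_\tau = 0$ for a positive hyperbolic end (for any iterate of degree $\le Q$). Summing gives $\sum_k CZ_\tau(\gamma_k) = e_+(C) - e_-(C)$. Combining with $-\chi(C)$, the $e_-(C)$ contributions cancel and one obtains
\begin{equation*}
-\chi(C) + \sum_k CZ_\tau(\gamma_k) = 2g(C) - 2 + h(C) + 2 e_+(C).
\end{equation*}

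For the relative Chern number, I would use additivity under addition of homology classes in $H_2(X,\mathbb{Z})$, namely
\begin{equation*}
c_\tau(Z_\alpha + m[F]) = c_\tau(Z_\alpha) + m\,\langle c_1(TX), [F]\rangle.
\end{equation*}
Since $[F]\cdot[F] = 0$ in $X$, the normal bundle of $F$ is trivial and $\langle c_1(TX),[F]\rangle = \chi(F) = 2 - 2g(F)$. For $c_\tau(Z_\alpha)$, additivity over the orbits in $\alpha$ reduces the computation to the individual orbits; Lemma~\ref{lem10} gives $c_\tau(\gamma_{p/q}) = q$, while the proof of Lemma~\ref{lem4} gives $c_\tau(C_a) = 1$ for each critical-point orbit. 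In either case the contribution equals the degree of the orbit, so $c_\tau(Z_\alpha) = q$, whence $c_\tau(C) = q + 2m(1-g(F))$.

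Putting the two pieces together yields the claimed formula
\begin{equation*}
\mathrm{ind}\, C = 2g(C) - 2 + h(C) + 2q + 4m(1-g(F)) + 2 e_+(C).
\end{equation*}
The only subtle step is verifying that the trivialization $\tau$ introduced in Section~5.1 produces the asserted CZ values on all iterates of degree $\le Q$; for the $\gamma_{p/q}$ and $e_a$ orbits this was already checked earlier (using $R''<0$ for the former and $\nabla^2 f(a)>0$ for the relevant latter case), and for $Q$-positive elliptics one uses the standard identity $CZ_\tau(\gamma^k) = 2\lfloor k\theta\rfloor + 1$ together with the rotation-number bound $\theta \in (0, q/Q)$ that ensures $\lfloor k\theta\rfloor = 0$ for $kq \le Q$.
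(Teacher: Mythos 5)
Your proof is correct and follows essentially the same route as the paper: the standard Fredholm index formula $\mathrm{ind}\,C=-\chi(C)+2c_\tau(C)+\sum CZ_\tau$, the Chern number via Lemma~\ref{lem10} together with the adjunction/fiber computation $c_1([F])=2-2g(F)$, and the Conley--Zehnder values from (\ref{eq6}) and Lemma~\ref{lem4}. The paper's printed proof only records the Chern-number step; you have simply made the remaining bookkeeping (Euler characteristic and the cancellation of the $Q$-negative elliptic ends) explicit.
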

\begin{proof}
By Lemma \ref{lem10} and the Adjunction formula,  we have  $$c_{\tau}(Z_{\alpha} + m[F]) = q + mc_{1}([F]) =q+2m(1-g(F)). $$
The conclusion follows from the definition of the Fredholm index,  (\ref{eq6}) and  Lemma \ref{lem4}.
\end{proof}

Given  an irreducible simple  holomorphic curve $C \in \mathcal{M}^J(\alpha)$,  define $e_Q(C)$ to be the total multiplicity of all $Q$--negative elliptic orbits in $\alpha$. $C $ is a \emph{special holomorphic plane} if it  is a holomorphic plane with $I(C)=\rm{ind} C =0 $ and has exactly one positive  end at a simple $Q$--negative elliptic orbit. (See Definition 3.15 of \cite{CG}.)

In \cite{H5}, Hutchings defines a self--intersection number $C\star C$ for a simple holomorphic curve $C$. For its definition, please  see  the following lemma. Here $C \star C$ agrees with the usual self--intersection number  when  $C$ is closed. To ensure that the ECH index is non--negative, we need to show that this self--intersection is non--negative.
\begin{lemma} \label{lem5}
Let  $C \in \mathcal{M}^J(Z_{\alpha}+m[F])$ be a simple irreducible holomorphic curve and $q=[\alpha] \cdot [F] \le Q$. Suppose that $g(F)-1 \ge Q $ and $J$ is a generic  almost complex structure which is sufficiently close to $\mathcal{J}_h(X, \omega_X)$.  Then
\begin{equation*}
2C \star  C = 2g(C)-2 +  {\rm{ind} }C+ h(C) + 2e_Q(C) +  4\delta(C) \ge 0.
\end{equation*}
In particular, $I(\mathcal{C}) \ge 0$.  In addition,  if $ I(C)={\rm{ind}} C =0$, then $C \star C =0$ if and only if $C $ is a special holomorphic plane which satisfies  $q=e_Q(C)=1$ and $h(C)=e_+(C)=0$.
\end{lemma}
\begin{proof}
We argue by contradiction.  Assume that $C \star  C < 0$.  Then we must have $g(C)=e_Q(C)=\delta(C)=0$.

If  $h(C)=0$, then $ {\rm{ind} }C$ is even and hence  $ {\rm{ind} }C=0$.   According to Lemma \ref{lem6}, we have
\begin{equation*}
-1+e_+(C) +q+2m(1-g(F))=0.
\end{equation*}
On the other hand,  $e_+(C) +q+2m(1-g(F)) \le 2q- 2mQ $. Keep in mind that $m \ge 0$ because of Lemma \ref{lem8}.  As a result, $m=0$. Then  ${\rm{ind} }C= 2q-2 + 2e_+(C) =0$ implies that $e_+(C)=0$ and $q=1$.  Therefore,  $C$  is closed with degree $1$.  This is impossible.

If $h(C)=1$, then $ {\rm{ind} }C=0$; otherwise $C \star C \ge 0$.  However, by Lemma \ref{lem6}, we know that $ {\rm{ind} }C$ is odd. We get a  contradiction.

In conclusion, $C \star  C \ge 0$.  Follows from Proposition 4.8 of \cite{H5} and Theorem 4.15 of \cite{H2}, we have $I(\mathcal{C}) \ge 0$.

Now let us prove the last statement. Assume that $ {\rm{ind}} C =0$ and  $C \star  C =0$. Then we have $\delta(C)=0$ and $$2g(C)-2 + h(C) + 2e_Q(C)=0.$$ If $g(C)=0$,  then there are two possibilities:
\begin{enumerate}
\item
$ h(C)=0$  and $ e_Q(C)=1$;
\item
$h(C)=2$  and $ e_Q(C)=0.$
\end{enumerate}
On the other hand, $ {\rm{ind}} C =0$  and Lemma \ref{lem6} implies that  
\begin{equation} \label{eq22}
h(C)+ 2q+2e_+(C) +4m(1-g(F))=2.  
\end{equation}
 It is worth noting that $ h(C)+ 2q+2e_+(C) +4m(1-g(F)) \le 4q-4mQ$ due to our assumption.  In either cases, we have $m=0$; otherwise, the left hand side of (\ref{eq22}) is strictly less then $2$.

 In the first case that $h(C)=0$ and $ e_Q(C)=1$, then $2q+2e_+(C) =2$ implies that $q=1$ and $e_+(C)=0$.  Consequently,  $C$ is a special holomorphic plane with one positive end at a simple $Q$--negative periodic orbit and has no other ends.
 In the second case,  $q+e_+(C) =0  $ implies that $q=0$. This is impossible.

If $g(C)=1$,  then $C\star C=0 $ implies that  $h(C)=e_Q(C)=0$.  By   $ {\rm{ind}} C =0$  and Lemma \ref{lem6},  we have  
\begin{equation} \label{eq23}
  2q+2e_+(C) +4m(1-g(F))=0.  
\end{equation}
Suppose that $Q<g(F)-1$. If $m \ge 1$, then the left hand side of (\ref{eq23}) is strictly less than zero.   If $m=0$, then $q=e_+(C)=0$. $C$ is  a closed  curve  and $[C]=n[F]$ because of Lemma \ref{lem2}.  The Adjunction formula and our assumption $g(F) \ge 2$ implies that $C$ cannot have genus $1$.

If $Q=g(F)-1$, then $C$ can not be closed as before.  Also,  $h(C)=e_Q(C)=0$ implies that all the ends of $C$ are asymptotic to $Q$--positive elliptic orbits. By Lemma \ref{lem4}, $I(C) \ge 2$ contradicts with $I(C)=0$.
 \end{proof}

\begin{definition} \label{def1}
Let $u: \overline{D} \to \overline{X}$ be a section which is asymptotic to a periodic orbit with degree $1$.  The section  $u$ is called horizontal  if  $Im(du) \subset T\overline{X}^{hor}$.
\end{definition}
Note that for any $J \in \mathcal{J}_h (X, \omega_X)$, the horizontal section is $J$--holomorphic. 

\paragraph{Proof of the part \ref{A} of Theorem \ref{thm3}  }
\begin{proof}
To define the cobordism map, let us consider the moduli space $\mathcal{M}_{0}^J(\alpha + m[F])$ and $\mathcal{C}=\sum_k d_k C_k \in \mathcal{M}_{0}^J(\alpha + m[F])$.  Due to  Lemmas \ref{lem3} and \ref{lem8}, we always assume that  $m\ge0 $.

Let us first  consider the case that  $Q>g(F)-1$. Choose a generic almost complex structure $J \in \mathcal{J}_h(X, \omega_X)$.  Since $Q>g(F)-1$, the element $\mathcal{C} \in \mathcal{M}_0^J(\alpha, Z_{\alpha} + m[F]) $  cannot contain a closed component; otherwise $I\ge 2$. (See Lemma 5.7 of \cite{GHC}.)

 By Lemma \ref{lem4} and Theorem \ref{thm1} and (\ref{eq14}), $I(\mathcal{C}) =I_m(\alpha) \ge 0$ for any orbit  set and $m \ge 0$.  Moreover, $I=0$ if and only if  $m=0$ and the orbits set  is of the form  $\alpha=(\Pi_a e_a^{m_a})  e_0^{m_0} e_1^{m_1}$ , where $a $ is  a critical point of $f_S$ with $\nabla^2 f_S(a)>0$.  Thus let us assume $\alpha=(\Pi_a e_a^{m_a})  e_0^{m_0} e_1^{m_1}$ and $m=0$.   By the energy formula and Remark \ref{rmk2},  we have  $E(\alpha)=\sum_a f_S(a)$.

 We claim that the only element in $\mathcal{M}_0^J(\alpha)$ is of the form $\mathcal{C}=\sum_a m_a C_{e_a} + m_0 C_{e_0} +m_1 C_{e_1}$, where $C_{e_*}$ is a special holomorphic plane. 
 To see this, let  $\mathcal{C} =\sum_k d_k C_k$.  
 First note  that all periodic orbits in $\alpha$ are $Q$--negative elliptic,   therefore  $C_k \star  C_k \ge 0$   by definition.
 Proposition 4.8 of \cite{H5} and Theorem 4.15 of \cite{H2}  imply that ${\rm{ind} } C_k=I(C_k)=0$ for any $k$.  By the fact that  $\pi_X : C_k \to \overline{D}$ is holomorphic and the  Riemann--Hurwitz formula,  we have 
 $$2g(C_k)-2=b-e_-(C_k)-q_k,$$
 where $q_k$ is the degree of $\pi_X \vert_{C_k}$, $e_-(C_k)$ is the number of ends of $C_k$ and $b \ge 0$ is  a certain counting    the branched points.  According to  Lemma \ref{lem6}, we have   $$0={\rm{ind} } C_k= 2g(C_k)-2 +2q_k =  b+ q_k -e_-(C_k).$$  Consequently, $b=g(C_k)=0$ and $q_k=e_-(C_k)=1$.  Thus $C_k$ is  a special holomorphic plane.  By Remark \ref{rmk3}, $C_k$ actually is  a  horizontal section because  $E(C_k)=0$ or $f_S(a)$.

 Let $\tilde{C} $ be an unbranched cover of $C_{e_*}$  (disjoint copies of $C_{e_*}$) with ${\rm{ind}}\tilde{C}=0$. It satisfies ${\rm ind} \tilde{C}  > 2g(\tilde{C} )-2 + h_+(\tilde{C} )$, where $h_+(\tilde{C})$ is the number of ends at even periodic orbits ($h_+(C)=0$ in our case).  
   By the automatic transversality theorem \cite{Wen},   $\tilde{C} $  is Fredholm regular for any  $J \in  \mathcal{J}_h(X, \omega_X)$.   Therefore,  $\mathcal{M}_0^J(\alpha)$  only  consists of a single element and  hence it is compact  automatically.
   Then we define $PFC(X, \omega_X)_J \alpha=\# \mathcal{M}_0^J(\alpha) $;  otherwise,  $PFC(X, \omega_X)_J =0 $. 

To show that it is a chain map, i.e., $PFC(X, \omega_X )_J \circ \partial =0 $. 
Let  $\beta$ be an ECH generator and $\mathcal{C}'=\sum_k d'_kC'_k \in \mathcal{M}_1^J(\beta + m[F])$ be a  holomorphic current with ECH index 1.   According to  Lemma \ref{lem4} and Theorem \ref{thm1} and (\ref{eq14}), $I_m(\beta)=1$ implies that $m=0$ and $\beta$ doesn't involve $Q$--positive elliptic orbits. By definition, $C_k' \star C'_k \ge 0$.  In this case, one can show that $\mathcal{C}' $ consists of embedded holomorphic curves and special holomorphic planes by using the ECH inequalities in \cite{H2} and \cite{H5}.  Thus the moduli space $\mathcal{M}_1^J(\beta)$ is a 1--dimensional manifold.  
Note that  the ECH index of holomorphic curves are non--negative by Lemma \ref{lem4} and Theorem \ref{thm1} and (\ref{eq14}). Therefore, a broken holomorphic curve arising as a limit of curves in  $\mathcal{M}_1^J(\beta)$ consists of a holomorphic curve with $I=1$ in the top level, a curve with $I=0$ in the cobordism level, and connectors (branched covers of trivial cylinders) in between.  Make use of the gluing analysis in \cite{HT1} and \cite{HT2},   we have $PFC(X, \omega_X )_J \circ \partial \beta =0 $.

Now we turn to consider the case that  $Q \le g(F)-1$.  Fix  a generic $\Omega_X$--tame almost complex $J$ structure which is sufficiently close  to $ \mathcal{J}_h(X, \omega_X)$.  By Lemma \ref{lem2}, we know that  a closed holomorphic curve has homology class $m[F]$ for $m \ge 0$.  However, $I(m[F])<0$, this is impossible for a generic $J$.   Hence,  there are no closed components  in $\mathcal{C}.$
 According to Proposition  4.8 of \cite{H5} and Corollary  \ref{lem5}, $I(\mathcal{C})=0$ implies that
\begin{enumerate}
\item
$I(C_k)={\rm{ind}} C_k =0$ and $C_k$ is embedded for each $k$.
\item
$C_k \star C_l=0$ for   $k \ne l$.
\item
\begin{enumerate}
\item
If $C_k \star C_k  >0$, then $d_k=1$.
\item
If $C_k \star C_k =0$, then $C_k$ is a  special  holomorphic plane which is asymptotic to a simple $Q$--negative periodic orbit with degree $1$.
\end{enumerate}
\end{enumerate}
Therefore, $\mathcal{C} = \mathcal{C}_0+ \sum_a d_a C_{e_a} + d_1C_{e_1} + d_0 C_{e_0}$, where $\mathcal{C}_0$ is an embedded holomorphic current with $I=0$ and    $C_{e_*} $ are special  holomorphic planes. 
 One can use the  same argument in \cite{CG} (Proposition 3.16) to show that $\mathcal{M}_{0}^J(\alpha )$ is a finite  set. As a consequence,  the definition  (\ref{eq2}) makes  sense in this case.
To show that   $PFC(X, \omega_X )_J $  is a chain map, the argument is the same as the case $Q>g(F)-1$. We left the details to  readers. 

 Finally, in either cases, it is not difficult  to extend the cobordism maps  over the  ring $\mathds{A}(X)$. For more details, please see \cite{CG}.

\end{proof}

\paragraph{Proof of the  part \ref{B} of Theorem \ref{thm3}}
\begin{proof} 

\begin{itemize}
\item
Assume that $Q>g(F)-1$.  According to  the proof of    part \textbf{A}, we know that  $PFC(X, \omega_X)_J \alpha=0$ unless that    $\alpha=(\Pi_a e_a^{m_a})  e_0^{m_0} e_1^{m_1}$, where $a$ are    critical points  of $f_S$ with $\nabla^2 f_S(a)>0$. Moreover,  there is only one element  in $\mathcal{M}_0^J(\alpha)$ which consists of copies of horizontal sections. According to \cite{CG}, the sign of such an element is positive.  In sum, $ \# \mathcal{M}_0^J(\alpha)=1$.

\item
Now let us consider the second case that $2Q+1\le g(F)$. 
Assume that the Morse function $f_S: S \to \mathbb{R}$ is chosen as in the proof of Theorem 5.3 of \cite{H1}. 
Then $(Y, \pi, \omega)$ has no $Q$--negative elliptic orbits with degree 1 except $e_0, e_1$.
By Theorem 5.3 of  \cite{H1} and our assumption, the generators of $PFH_*(Y, \omega, Q)$ are   represented by   a  linear combination of orbits sets  $\alpha =  (\Pi_a  h_a)   \alpha_0$, where   $a$  are  saddle points of $f_S$.    Let us  denote  the degree of   $\Pi_a  h_a$  by    $q_h$.   Then  $\alpha_0$ is an orbit  set of index $0 \le I_0(\alpha_0)  \le 2Q'-1$ and $Q'=Q -q_h$.

By Lemma \ref{lem4} and Theorem \ref{thm1} and (\ref{eq14}),
\begin{equation*}
I_m(\alpha)=q_h + I_0(\alpha_0)+ 2m(Q+1-g(F)).
\end{equation*}
Since  $2Q+1\le g(F)$, $I_m(\alpha) \le q_h +I_0(\alpha_0) -2m Q < 2Q -2mQ$. According to Lemma \ref{lem8}, we may assume that $m\ge 0$.  Therefore, $I_m(\alpha)<0$ unless $m=0$.
When $m=0$,  $I_0(\alpha)=0$ if and only if $q_+=q_h=I_0(\alpha_0)=0$.  In particular,  $\alpha=  e_0^{m_0} e_1^{m_1}=e^{m_0+m_1}$.

Recall that for all $J \in  \mathcal{J}_h(X, \omega_X)$,  transversality of $\mathcal{M}^J_0(e_0^{m_0} e_1^{m_1})$ can be achieved.  If    $J \in \mathcal{J}_{tame}(X, \omega_X)$   is sufficiently close to $  \mathcal{J}_h(X, \omega_X)$, then $\mathcal{M}_0^J(e_0^{m_0} e_1^{m_1})$ also consists of a single  element with a  positive sign.

\end{itemize}
\end{proof}

School of Mathematical Sciences, University of Adelaide
Adelaide SA, Australia

\verb| E-mail address:  guanheng.chen@adelaide.edu.au|


\begin{thebibliography}{unsrt}
\addcontentsline{toc}{chapter}{Bibliography}

\bibitem{DNB}
D. N. Bernshtein (1975),
{\it The number of roots of a system of equations}, Functional
Anal. Appl. 9, 183--185. Translated from Funktsional. Anal, i Prilozhen., 9,
1--4.


\bibitem{FB}
F. Bourgeois,
\it{A Morse-Bott approach to contact homology},
from: “Symplectic and contact
topology: interactions and perspectives (Toronto, ON/Montreal, QC, 2001)”, Fields Inst. Comm.
35, Amer. Math. Soc., Providence, RI (2003) 55–-77.


\bibitem{GHC}
G. Chen,
\it{ On  cobordism maps on periodic Floer homology}, Algebr. Geom. Topol. 21-1 (2021), 1--103.


\bibitem{CG}
C. Gerig,
\it{ Taming the pseudoholomorphic beasts in $ \mathbb{R} \times S^1 \times S^2$}, Geom,Topol. 24(2020) 1791--1839.



\bibitem{CG2}
C.  Gerig,
{\it
 Seiberg--Witten and Gromov invariants for self--dual harmonic 2--forms},  arXiv:1809.03405, (2018).

\bibitem{H4}
M. Hutchings,
{\it An index inequality for embedded pseudoholomorphic curves in symplectizations},
J. Eur. Math. Soc. 4 (2002) 313–-361.

\bibitem{H1}
M. Hutchings, M. Sullivan,
{\it The periodic Floer homology of Dehn twist},
Algebr. Geom. Topol. 5 (2005), pp. 301 -- 354. issn: 1472--2747.

\bibitem{H2}
M. Hutchings,
\it{The embedded contact homology index revisited},  from “New perspectives
and challenges in symplectic field theory” (M Abreu, F Lalonde, L Polterovich, editors),
CRM Proc. Lect. Notes 49, Amer. Math. Soc., Providence, RI (2009) 263–297.


\bibitem{H3}
M. Hutchings,
{\it Lecture note on embedded contact homology},
from “Contact and
symplectic topology” (F Bourgeois, V Colin, A Stipsicz, editors), Bolyai Soc. Math.
Stud. 26, János Bolyai Math. Soc., Budapest (2014) 389–-484.

\bibitem{H5}
M. Hutchings,
{\it Beyond ECH capacities},
Geometry and Topology 20 (2016) 1085--1126.




\bibitem{HT1}
M. Hutchings and C. H. Taubes,
{\it Gluing pseudoholomorphic curves along
branched covered cylinders I },  J. Symplectic Geom. 5 (2007) 43–-137.


\bibitem{HT2}
M. Hutchings and C. H. Taubes,
{\it Gluing pseudoholomorphic curves along
branched covered cylinders II} , J. Symplectic Geom. 7 (2009) 29–-133.


\bibitem{HT}
M. Hutchings and C. H. Taubes,
{\it Proof of the Arnold chord conjecture in three dimensions II
},
Geom, Topol. 17(2003), 2601--2688.


\bibitem{LT}
Y-J. Lee and C.H.Taubes,
{\it Periodic Floer homology and Seiberg--Witten-Floer cohomology},
J.Symplectic Geom. 10 (2012), no. 1, 81--164.

\bibitem{BS}
B. Sturmfels,
{\it Solving Systems of Polynomial Equations}  
CBMS Regional Conference Series in Mathematics
Volume: 97; (2002)   

\bibitem{PS}
P. Seidel,
{\it A long exact  sequence for symplectic Floer cohomology},
 Topology 42 (5) (2003) 1003--1063. 

\bibitem{PS1}
P. Seidel,
{\it Floer homology and the symplectic isotopy problem}
PhD thesis, Oxford University, 1997.

\bibitem{T1}
C. H. Taubes,
{\it Seiberg--Witten and Gromov invariants for symplectic 4--manifolds,} First
International Press Lecture Series 2, International Press, Somerville, MA (2000)
MR1798809




\bibitem{T2}
C. H. Taubes,
{\it Embedded contact homology and Seiberg--Witten Floer cohomology I--V}, Geometry and Topology 14 (2010).

\bibitem{Wen}
C. Wendl,
{\it  Automatic Transversality and Orbifolds of Punctured Holomorphic Curves in Dimension Four}
Comment. Math. Helv. 85 (2010), 347–-407.
\end{thebibliography}
\end{document}